\numberwithin{equation}{section}
\newtheorem{thm}[equation]{Theorem}
\newtheorem{definition}[equation]{Definition}
\newtheorem{lem}[equation]{Lemma}
\newtheorem{claim}[equation]{Claim}
\newtheorem*{thm*}{Theorem}
\theoremstyle{remark}
\newtheorem{remark}[equation]{Remark}
 \newcommand{\A}{\mathcal{A}}
  \newcommand{\K}{\mathcal{K}}
   \newcommand{\G}{\mathcal{G}}
  \newcommand{\F}{\mathcal{F}}
  \newcommand{\RR}{\mathbb{R}}
        \newcommand{\RF}{\mathbb{R}[\mathcal{F}]}
  \newcommand{\by}{\mathbf{y}}
 \newcommand{\ind}{{\operatorname{ind}}}
  \newcommand{\inj}{{\operatorname{inj}}}
\newcommand{\id}{{\operatorname{id.}}}
\def\Ex {{\mathbb E}}
\begin{document}

\title {Undecidability of linear inequalities in graph homomorphism densities}

%\date{}
\author{Hamed Hatami}
\address{Department of Mathematics, Princeton University, Princeton, NJ}
\email{hhatami@math.princeton.edu \\ snorin@math.princeton.edu}

\author{Serguei Norine}
\thanks{Hamed Hatami was supported in part by NSERC. Serguei Norine was supported in part by NSF
  under Grant No. DMS-0701033}

\begin{abstract}
The purpose of this article is to show that even the most elementary  problems in asymptotic extremal graph theory can be highly non-trivial.
We study linear inequalities between graph homomorphism densities.
In the language of quantum graphs the validity of such an  inequality is equivalent to the positivity of a corresponding quantum graph. Similar to the setting of polynomials, a quantum graph that can be represented as a sum of squares of labeled quantum graphs is necessarily positive. Lov\'asz~(Problem 17 in \cite{openProblems}) asks whether the opposite is also true.  We answer this question and also a related question of Razborov in the negative by introducing explicit valid inequalities that do not satisfy the required conditions.
Our solution to these problems is based on a reduction from real multivariate polynomials and uses the fact that there are positive polynomials that cannot be expressed as sums of squares of polynomials.

It is known that the problem of determining whether a multivariate polynomial is positive is decidable. Hence it is very natural to ask ``Is the problem of determining the validity of a linear inequality between homomorphism densities decidable?''  We give a negative answer to this question which shows that such inequalities are inherently difficult in their full generality. Furthermore we deduce from this fact that the analogue of Artin's solution to Hilbert's seventeenth problem does not hold in the setting of quantum graphs.
\end{abstract}

\maketitle

\noindent {{\sc AMS Subject Classification:} \quad 05C25-05C35-12L05}
\newline
{{\sc Keywords:} graph homomorphism density, quantum graph, decidability, Artin's theorem

\section{Introduction \label{sec:intro}}
Many fundamental theorems in extremal graph theory can be expressed as algebraic inequalities between subgraph densities. As it is explained below, for dense graphs, it is possible to replace subgraph densities with homomorphism densities.  An easy observation shows that one can convert any algebraic inequality between homomorphism densities to a linear inequality. Inspired by the work of Freedman, Lov\'asz and Schrijver~\cite{MR2257396}, in recent years a new line of research  in the direction of treating and understanding these inequalities in a unified way has emerged. Razborov~\cite{MR2371204} observed that a typical proof of an inequality in extremal graph theory between homomorphism densities of some fixed graphs involves only homomorphism densities of finitely many graphs. He states in~\cite{RazborovTuran} that
in his opinion the most interesting general open question about asymptotic extremal combinatorics is whether every true linear inequality between homomorphism densities can be proved using a finite amount of manipulation with homomorphism densities of finitely many graphs.
Although this question itself is not well-defined, a natural precise refinement is whether the problem of determining the validity of a linear inequality between homomorphism densities is decidable. We show that it is not. Our result in particular answers  various related questions by Razborov~\cite{MR2371204}, Lov\'asz~\cite{openProblems}, and Lov\'asz and Szegedy~\cite{Positivstellensatz}.

An interesting recent result in extremal graph theory, proved in several different forms~\cite{MR2257396, MR2371204, Positivstellensatz}, says that every linear inequality between homomorphism densities  follows from the positive semi-definiteness of a certain infinite matrix. As an immediate consequence, every algebraic inequality between the homomorphism densities follows from an infinite number of certain applications of the Cauchy-Schwarz inequality. This is consistent with the fact that many results in  extremal graph theory are proved by one or more tricky applications of the Cauchy-Schwarz inequality. Lov\'asz~\cite{openProblems} composed a collection of open problems in this area, and in Problem 17 he asks whether it is true or not that every algebraic inequality between homomorphism densities follows from a \emph{finite} number of applications of this inequality. It is possible to rephrase this question in the language of quantum graphs defined by Freedman, Lov\'asz and Schrijver~\cite{MR2257396}. The validity of a linear inequality between homomorphism densities corresponds to the positivity of a corresponding quantum graph. The square of a labeled quantum graph is trivially positive. In this language, Lov\'asz's question  translates to the following statement: ``Is it true that every positive quantum graph can be expressed as the sum of a finite number of squares of labeled quantum graphs?'' The question in this form is stated by Lov\'asz and Szegedy in~\cite{Positivstellensatz}. In Theorem~\ref{thm:Lovasz}, we show that the answer is negative.

In~\cite{MR2371204} Razborov introduced flag algebras which provide a powerful formal calculus that captures many standard arguments in extremal combinatorics. He presented several questions about the linear inequalities between homomorphism densities among which is a question about a calculus introduced by him  called the \emph{Cauchy-Schwarz calculus}. This calculus which allows trickier applications of the Cauchy-Schwarz inequality  can be used to prove the positivity of quantum graphs. He asks (\cite{MR2371204} Question 2) whether the Cauchy-Schwarz calculus is complete. We give a negative answer to this question by constructing positive quantum graphs whose positivity does not follow from this calculus.

A multivariate polynomial that takes only non-negative values over the reals is called  \emph{positive}.
Our solutions to Lov\'asz's seventeenth problem and Razborov's question about the Cauchy-Schwarz calculus are both based on reductions from real multivariate polynomials and they use the fact that there are positive polynomials that cannot be expressed as sums of squares of polynomials. Hence these answers are expected once one accepts the analogy to multivariate polynomials. However Artin~\cite{artin} solving Hilbert's seventeenth problem showed that every positive  polynomial  can be represented as a sum of squares of \emph{rational functions}.
%Hence the existence of a complete analogy between quantum graphs and multi-variate polynomials would imply an analogue of Artin's theorem for quantum graph. In fact such a complete analogy (as well as a positive answer to Lov\'asz's or Razborov's problems) would have implied the incredible fact that there exists an algorithm that decides whether a linear inequality between homomorphism densities is valid.

In Theorem~\ref{thm:undecide} we prove that  determining the validity of a linear inequality between homomorphism densities  is undecidable. This reveals a major difference between the positivity of quantum graphs and the positivity of polynomials over reals as (for example by the celebrated work of Tarski~\cite{MR0028796}) it is known that the latter is decidable. Furthermore we deduce from this theorem that the analogue of Artin's solution to Hilbert's seventeenth problem does not hold in the setting of quantum graphs. This in particular answers Problem 21 of Lov\'asz's list of open problems~\cite{openProblems}.

Although our results show that not every algebraic inequality between homomorphism densities is a linear combination of  a finite number of semi-definiteness inequalities, the positive semi-definite characterization is still a powerful approach for proving such inequalities. Razborov in~\cite{RazborovTuran} illustrated the power of this method by applying it to prove various results (some new and some known) in extremal combinatorics. Razborov~\cite{RazborovTuran} and Lov\'asz and Szegedy~\cite{Positivstellensatz} observed that it is possible to use this method to verify every linear inequality between homomorphism densities within an  arbitrarily small error term. As this result suggests, the positive semi-definiteness method is extremely useful in proving bounds for problems in extremal combinatorics: Razborov~\cite{RazborovTuran} showed that a straightforward application of this method substantially improves the previously known bound for the Tur\'an's function of $K_4^3$, one of the most important problems in extremal combinatorics.

\section{Preliminaries \label{sec:prel}}
In this paper all graphs are simple and finite. For a graph $G$, let $V(G)$ and $E(G)$, respectively denote  the set of the vertices and the edges of $G$. The unique graph with no vertices is denoted by $\emptyset$. The \emph{density} of a graph $H$ in a graph $G$, denoted by $t_\inj(H;G)$, is the probability that a random \emph{embedding} of the vertices of $H$ in the vertices of $G$ maps every edge of $H$ to an edge of $G$. The \emph{homomorphism density} of $H$ in $G$, denoted by $t(H;G)$, is the probability that a random mapping (not necessarily injective) from the vertices of $H$ to the vertices of $G$ maps every edge of $H$ to an edge of $G$. We define $t_\inj(\emptyset;G) := t(\emptyset;G) :=1$, for every graph $G$.

Although $t(H;G)$ itself is an object of interest, extremal graph theory more often concerns $t_{\inj}(H;G)$. However,
the following simple lemma from~\cite{MR2274085}  shows that this two quantities are close up to an error term of $o(1)$, and hence are equivalent asymptotically.
\begin{lem}~\cite{MR2274085}
\label{lem:tAsym} For every two graphs $H$ and $G$,
$$|t(H;G)-t_\inj(H;G)| \le \frac{1}{|V(G)|} {|V(H)| \choose 2}=o_{|V(G)| \rightarrow \infty}(1).$$
\end{lem}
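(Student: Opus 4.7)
The plan is to view both densities as probabilities over a uniformly random map $\phi : V(H)\to V(G)$, so the difference comes entirely from the fact that $t(H;G)$ averages over all maps while $t_\inj(H;G)$ averages only over the injective ones. Let $k=|V(H)|$, $n=|V(G)|$, let $A$ be the event that $\phi$ is a homomorphism (every edge of $H$ is mapped to an edge of $G$), and let $B$ be the event that $\phi$ is injective. Then by definition $t(H;G)=\Pr[A]$ and $t_\inj(H;G)=\Pr[A\mid B]$.

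The key estimate is a union bound on $\Pr[B^c]$: the event $B^c$ is the union, over the $\binom{k}{2}$ unordered pairs $\{u,v\}\subseteq V(H)$, of the events $\{\phi(u)=\phi(v)\}$, each of which has probability $1/n$. Therefore
\[
\Pr[B^c]\ \le\ \frac{1}{n}\binom{k}{2}.
\]

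Now I would express the difference by conditioning on whether $B$ occurs. Writing $\Pr[A]=\Pr[A\cap B]+\Pr[A\cap B^c]$ and $\Pr[A\mid B]=\Pr[A\cap B]/\Pr[B]$, a short manipulation yields
\[
\Pr[A]-\Pr[A\mid B]\ =\ \Pr[A\cap B^c]\ -\ \Pr[A\cap B]\cdot\frac{\Pr[B^c]}{\Pr[B]}.
\]
Both terms on the right are nonnegative. The first is bounded by $\Pr[B^c]$, and the second is also bounded by $\Pr[B^c]$ since $\Pr[A\cap B]\le \Pr[B]$. Hence $|\Pr[A]-\Pr[A\mid B]|\le \Pr[B^c]\le \tfrac{1}{n}\binom{k}{2}$, which is exactly the claimed inequality; the $o(1)$ statement then follows from fixing $H$ and letting $n=|V(G)|\to\infty$.

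There is essentially no real obstacle here — the proof is a one-line union bound combined with an elementary identity — so the main bookkeeping step is just making sure the signs in the identity for $\Pr[A]-\Pr[A\mid B]$ are handled carefully to obtain the absolute-value bound, and that the degenerate cases (e.g. $n<k$, where $t_\inj(H;G)=0$ and $\Pr[B]=0$) are either ruled out or absorbed into the trivial inequality $|t(H;G)-t_\inj(H;G)|\le 1\le \tfrac{1}{n}\binom{k}{2}$ when $n\le \binom{k}{2}$.
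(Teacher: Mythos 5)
Your proof is correct, and it is essentially the standard argument for this cited lemma: writing $t(H;G)=\Pr[A]$ and $t_\inj(H;G)=\Pr[A\mid B]$, observing that the two can differ by at most $\Pr[B^c]$, and bounding $\Pr[B^c]\le\binom{|V(H)|}{2}/|V(G)|$ by a union bound over pairs of vertices of $H$. Your handling of the degenerate case $|V(G)|<|V(H)|$ (where the bound exceeds $1$) is also the right way to dispose of it.
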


Many important results in extremal graph theory can be expressed as algebraic inequalities between homomorphism densities. For example Goodman's theorem~\cite{MR0107610}, which generalizes the classical Mantel-Tur\'an  Theorem, says that for every graph $G$, $t(K_3; G) \ge 2t(K_2; G)^2- t(K_2; G)$. Note that if $H_1 \dot\cup H_2$ denotes the disjoint union of two graphs $H_1$ and $H_2$, we have $t(H_1 \dot\cup H_2; G)=t(H_1; G) t(H_2; G)$. This observation allows us to convert any algebraic inequality between homomorphism densities to a linear inequality. For example one can restate Goodman's theorem as $t(K_3; G) - 2t(K_2 \dot\cup K_2; G) + t(K_2; G) \ge 0$.

A \emph{partially labeled graph} is a graph in which some of the vertices are labeled by distinct natural numbers (there may be any number of unlabeled vertices). Let ${\mathcal F}$ denote the set of all partially labeled graphs up to label-preserving isomorphism. A partially labeled graph in which all vertices are labeled is called a \emph{fully labeled} graph. The product of two partially labeled graphs $H_1$ and $H_2$, denoted by $H_1 \cdot H_2$, is defined by taking their disjoint union, and then identifying vertices with the same label (if multiple edges arise, only one copy is kept). Clearly this multiplication is associative and commutative, and thus turns ${\mathcal F}$ into a commutative semi-group. For every finite subset $L$ of natural numbers, let ${\mathcal F}_L$ denote the sub-semi-group of all partially labeled graphs whose set of labels is exactly $L$. Note that  ${\mathcal F}_\emptyset$ is the set of all finite graphs with no labels, and the product of two graphs in ${\mathcal F}_\emptyset$ is their disjoint union.

We extend the definition of homomorphism density to partially labeled graphs in the following way. Consider a finite set $L \subset \mathbb{N}$, a partially labeled graph $H \in {\mathcal F}_L$, a graph $G$, and a map $\phi:L \to V(G)$. Then $t(H;G, \phi)$ is defined to be the probability that a random map from $V(H)$ to $V(G)$ is a homomorphism conditioned on the event that the labeled vertices are mapped according to $\phi$. Note that  for every two partially labeled graphs $H_1,H_2 \in {\mathcal F}_L$, a graph $G$, and a map $\phi:L \to V(G)$,
\begin{equation}
\label{eq:product}  t(H_1 \cdot H_2 ;G, \phi) = t(H_1;G, \phi) t(H_2;G, \phi).
\end{equation}

A \emph{graph parameter} is a function that maps every graph to a real number. For example, given any graph $G$, the function $f:H \mapsto t(H; G)$ is a graph parameter. Freedman, Lov\'asz, and Schrijver~\cite{MR2257396} proved that
it is possible to  characterize the graph parameters that are defined in a similar fashion using  some positive semi-definiteness and rank conditions. Since then, various similar characterizations have been found. In particular, the following statement is proved in~\cite{Positivstellensatz}: Let $f$ be a graph parameter. There exists a sequence of graphs $\{G_n\}_{n \in \mathbb{N}}$ such that  $\lim_{n \rightarrow \infty} t(H; G_n) = f(H)$, for every $H$, if and only if the following conditions are satisfied:
\begin{itemize}
\item [{\bf (i)}] $f(\emptyset)=1$, and $f(H_1)=f(H_2)$, if $H_1$ is obtained from $H_2$ by adding isolated vertices.
\item [{\bf (ii)}] The ${\mathcal F} \times {\mathcal F}$ matrix whose entry in the intersection of the row corresponding to $H_1$ and the column corresponding to $H_2$ is $f(H_1 \cdot H_2)$ is positive semi-definite. (Here the labels of $H_1 \cdot H_2$ are removed.)
\end{itemize}

Consider real numbers $\alpha_1,\ldots,\alpha_k$ and graphs $H_1, \ldots, H_k$. We wish to investigate whether an  inequality of the form
\begin{equation}
\label{eq:linearIneq}
\alpha_1 t(H_1;G)+\ldots+\alpha_k t(H_k;G) \ge 0,
\end{equation}
holds for all graphs $G$. The positive semi-definiteness characterization shows that it suffices to verify the validity of $\alpha_1 f(H_1)+\ldots+\alpha_k f(H_k) \ge 0$, for all graph parameters $f$ satisfying Conditions~(i) and~(ii).  Note that if the ${\mathcal F} \times {\mathcal F}$ matrix in Condition (ii) was finite, then there would exist an algorithm for solving this problem using semi-definite programming  (see~\cite{MR1952986} for a survey on this topic). However since this matrix is of infinite dimensions, in practice one can only restrict to a finite sub-matrix of it and hope that $\alpha_1 f(H_1)+\ldots+\alpha_k f(H_k) \ge 0$ is still valid if the weaker condition that this sub-matrix is positive semi-definite is required. The examples discussed in Section~\ref{sec:intro} show that this method is indeed very powerful and in many cases one succeeds in finding a proof for such inequalities. Lov\'asz in~\cite{openProblems}  asks (Problem 17) whether the validity of (\ref{eq:linearIneq}) always follows from the positive semi-definiteness of a finite sub-matrix of the ${\mathcal F} \times {\mathcal F}$ matrix in Condition~(ii). As we will see in Section~\ref{sec:Lovasz17}, it is possible to reformulate this question in the language of quantum graphs.

A \emph{labeled quantum graph} is an element of the algebra $\mathbb{R}[{\mathcal F}]$, i.e. it is a formal linear combination of partially labeled graphs, and if $f = \sum_{i=1}^n \alpha_i H_i \in \mathbb{R}[{\mathcal F}]$ and $g = \sum_{i=1}^m \beta_i F_i \in \mathbb{R}[{\mathcal F}]$, then $f \cdot g =  \sum_{i=1}^n \sum_{j=1}^m \alpha_i \beta_j  H_i \cdot F_j$. For every finite set of positive integers $L$, $\mathbb{R}[{\mathcal F}_L]$ is a sub-algebra of $\mathbb{R}[{\mathcal F}]$. The elements of $\mathbb{R}[{\mathcal F}_\emptyset]$ are called \emph{quantum graphs}.

Consider a labeled quantum graph $f = \sum_{i=1}^k \alpha_i H_i \in \mathcal{F}$, and for every $i \in [k]$, let $L_i$ be the set of all labels appearing on  $H_i$. For a graph $G$ and a map $\phi:\cup_{i=1}^k L_i \to V(G)$, define $t(f;G, \phi) := \sum_{i=1}^k \alpha_i t(H_i;G, \phi|_{L_i})$.
Let $\mathcal{K}$ be the ideal of $\mathbb{R}[{\mathcal F}]$ generated by elements of the form $F-\emptyset$, where $F$ is a possibly labeled $1$-vertex graph. Note that $\K$ is the linear subspace of $\mathbb{R}[{\mathcal F}]$ spanned by elements of the form $F-H$ where $F$ is obtained from $H$ by adding a possibly labeled isolated vertex.  Hence $t(f;G, \phi) = 0$, if $f \in \K$, and  the function $t(\cdot;G, \phi)$ is a well-defined map from the quotient algebra $\A := \mathbb{R}[\mathcal{F}]/ \K$ to $\RR$.

Consider a finite set $L \subset \mathbb{N}$, a graph $G$, and a map $\phi:L \rightarrow V(G)$.
It follows from (\ref{eq:product}) that $f \mapsto t(f;G, \phi)$ defines a homomorphism from  $\mathbb{R}[{\mathcal F}_L]$ to $\RR$, and hence it is also   a well-defined homomorphism from $\A_{L} := \mathbb{R}[\mathcal{F}_L]/(\K \cap \mathbb{R}[{\mathcal F}_L])$ to $\RR$.

For every finite set of positive integers $L$, let the linear map $\llbracket  \cdot \rrbracket_L: \mathbb{R}[{\mathcal F}] \rightarrow \mathbb{R}[{\mathcal F}]$ be defined by un-labeling the vertices whose labels are \emph{not in}  $L$. Note that this map is not an algebra homomorphism, as it does not respect the product. However $\llbracket  \cdot \rrbracket_L$ maps $\K$ to $\K$, so we can consider $\llbracket  \cdot \rrbracket_L$ as a linear map from $\A$ to itself. We abbreviate $\llbracket  \cdot \rrbracket_\emptyset$ to $ \llbracket  \cdot \rrbracket$.

\subsection{Lov\'asz's seventeenth problem\label{sec:Lovasz17}}
We say that a labeled quantum graph $f \in \mathbb{R}[{\mathcal F}_L]$ is \emph{positive} and write $f \geq 0$, if for every graph $G$ and every $\phi: L \to V(G)$ we have $t(f;G, \phi) \geq 0$. By the discussion above, we can extend the definition of positivity to $\A_L$ and further to the whole of $\A$.
%The definition further extends to the whole of $\A$ as for every $g \in \RF$, there exists $L \subset \mathbb{N}$ and $h \in \mathbb{R}[\mathcal{F}_L]$ such that $g - h \in \K$.

Note that $g^2 \ge 0$, for every $g \in \mathbb{R}[{\mathcal F}_L]$. Furthermore for every subset $S \subseteq L$, the map $\llbracket \cdot \rrbracket_S$ preserves positivity. It follows that  $\llbracket g^2 \rrbracket \geq 0$, for every $g \in \mathbb{R}[{\mathcal F}]$. Hence one possible approach to prove an inequality of the form (\ref{eq:linearIneq}) is to express $f = \sum_{i=1}^k \alpha_i H_i$ as a sum of squares of labeled quantum graphs, i.e. to find  labeled quantum graphs $g_1,\ldots,g_m$ such that  $\sum_{i=1}^k \alpha_i H_i = \sum_{i=1}^m \llbracket  g_i^2 \rrbracket$.  Lov\'asz's seventeenth problem asks whether every positive quantum graph can be expressed in this form. In Section~\ref{sec:Lovasz} we prove the following theorem which answers this question in the negative. We say that $x \in \A_{\emptyset}$ is \emph{expressible as a sum of squares} if there exist $g_1,\ldots, g_m \in \A$ such that $x = \llbracket \sum_{i=1}^m g_i^2 \rrbracket$.

\begin{thm}
\label{thm:Lovasz}
There exists a positive quantum graph which cannot be expressed as a sum of squares.
\end{thm}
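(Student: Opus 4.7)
The plan is to reduce the theorem to the classical fact from real algebraic geometry that there exist nonnegative multivariate polynomials that are not sums of squares of polynomials; the Motzkin polynomial $M(x,y) = x^4 y^2 + x^2 y^4 - 3 x^2 y^2 + 1$ is the standard example. I will construct a positive quantum graph by substituting suitably chosen unlabeled ``generators'' for $x$ and $y$ and then show that any labeled SOS representation pulls back, via evaluation on a polynomially parameterized graphon family, to a polynomial SOS representation of $M$.

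Concretely, I would select two unlabeled quantum graphs $a_1, a_2 \in \A_\emptyset$ together with a two-parameter family of (weighted) graphons $W_{s,t}$ on $[0,1]$ whose pointwise values depend polynomially on $(s,t)$, arranged so that (i) the map $(s,t)\mapsto(t(a_1;W_{s,t}),t(a_2;W_{s,t}))$ covers an open set $U\subseteq\RR^2$, and (ii) for every labeled quantum graph $g\in\RR[\F_L]$ and every $\phi\colon L\to[0,1]$, the density $t(g;W_{s,t},\phi)$ is a polynomial in $(s,t,\phi)$. A natural candidate is a two-block weighted graphon with block weights $s$ and $t$, the generators $a_1,a_2$ being edge-type quantum graphs engineered to isolate each block density. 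Defining
\[
 f := M(a_1,a_2) \;=\; a_1^4 a_2^2 + a_1^2 a_2^4 - 3\,a_1^2 a_2^2 + \emptyset \;\in\; \A_\emptyset ,
\]
identity (\ref{eq:product}) and nonnegativity of $M$ on all of $\RR^2$ yield $t(f;G) = M(t(a_1;G), t(a_2;G)) \geq 0$ for every graph $G$ (and by continuity every graphon), so $f$ is a positive quantum graph.

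For the converse, suppose for contradiction that $f = \llbracket \sum_{j=1}^m g_j^2 \rrbracket$ for some labeled $g_j \in \RR[\F_{L_j}]$. Evaluating both sides at the graphon $W_{s,t}$, the unlabeling identity $t(\llbracket g^2 \rrbracket; W) = \int_{[0,1]^L} t(g;W,\phi)^2\,d\phi$ combined with (\ref{eq:product}) gives
\[
 M(s,t) \;=\; \sum_{j=1}^m \int_{[0,1]^{L_j}} t(g_j;W_{s,t},\phi)^2\,d\phi ,
\]
with each integrand the square of a polynomial in $(s,t,\phi)$ by property (ii). The crucial algebraic step is the classical observation that for any real polynomial $q(s,t,\phi)$, the integral $\int q^2\,d\phi$ is itself a sum of squares of polynomials in $(s,t)$: expanding $q = \sum_\alpha q_\alpha(s,t)\phi^\alpha$ with $q_\alpha\in\RR[s,t]$, one has $\int q^2\,d\phi = q^{\mathsf T} N q$, where $N$ is the positive semi-definite moment matrix of the $\phi$-monomials under Lebesgue measure; any factorization $N = BB^{\mathsf T}$ rewrites this as $\|B^{\mathsf T}q\|^2$, a sum of squares of polynomials in $(s,t)$. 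Summing over $j$ produces an identity $M(s,t) = \sum_k Q_k(s,t)^2$ of polynomials on the open set $U$, hence on all of $\RR^2$ by analyticity, contradicting Motzkin's theorem.

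The main obstacle is arranging the graphon family $W_{s,t}$ and the generators $a_1,a_2$ so that (i) and (ii) hold simultaneously for \emph{every} labeled quantum graph: one needs a weighted graphon whose values are genuinely polynomials in $(s,t)$ (or at least piecewise polynomial in a way compatible with integration by block-decomposition), together with generators whose densities decouple cleanly into $s$ and $t$ and have open image. Once such an explicit family is constructed, the moment-matrix step above is routine and transfers the SOS obstruction from polynomials directly to quantum graphs.
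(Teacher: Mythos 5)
Your overall architecture is the same as the paper's: reduce to the existence of positive polynomials outside $\Sigma_k$, and show that an SOS representation of the quantum graph, evaluated on a polynomially parameterized family, collapses to a weighted polynomial SOS. Your ``crucial algebraic step'' (integrating out the label variables turns $\llbracket g^2\rrbracket$ into $\sum_i p_i q_i^2$ with $p_i$ having non-negative coefficients) is correct and is exactly Lemma~\ref{lem:key}(i) and the $G$-sos notion in the paper, in the special case of a step graphon. The positivity of $f=M(a_1,a_2)$ is also fine.

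The genuine gap is in the construction of the non-SOS quantum graph, which you flag as ``the main obstacle'' but underestimate. With \emph{unlabeled} generators $a_1,a_2$ you cannot arrange $t(a_1;W_{s,t})=s$ and $t(a_2;W_{s,t})=t$ exactly: for any step graphon the densities of unlabeled quantum graphs are symmetric polynomial functions of the block weights, so what you actually pull back is an identity $M\bigl(P_1(s,t),P_2(s,t)\bigr)=\sum_k Q_k(s,t)^2$ for a non-invertible polynomial map $P$. Non-SOS-ness is not preserved under such compositions, and when you try to push the $Q_k$ down to the coordinates $(P_1,P_2)$ you pick up multipliers (e.g.\ squares of antisymmetric factors) — the conclusion is at best that $M$ lies in a preordering generated by the polynomial inequalities cutting out the image of $P$, which Motzkin's theorem does not rule out. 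Indeed, by Artin's theorem every positive polynomial becomes a sum of squares after multiplication by a suitable SOS, so any argument in which multipliers or denominators appear cannot be closed by ``$M$ is not SOS'' alone. The paper resolves exactly these two problems: it uses labels, the operators $\ind(\cdot)$ and a \emph{stringent} graph $H$ (Lemma~\ref{lem:stringent}) to obtain the exact evaluation $t(\llbracket\varphi_H(p)\rrbracket;H,\by)=y_1\cdots y_k\,p(\by)$ with no composition ambiguity, and it replaces ``not SOS'' by the stronger \emph{bad point} property of Lemma~\ref{lem:L2}, which is engineered to survive both the unavoidable prefactor $y_1\cdots y_k$ and the factor $(x_1^2+\cdots+x_k^2)^N$ introduced by the substitution $y_i=x_i^2/\sum_j x_j^2$. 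Without an analogue of these two devices your contradiction does not go through.
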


Whitney~\cite[Theorem 5a]{MR1503085} has shown that the functions $t_\inj(H;\cdot)$, $H$ connected, $V(H)>1$, are algebraically independent. Equivalently, the functions $t(H;\cdot)$, $H$ connected, $V(H)>1$, are algebraically independent, as one can straightforwardly verify that a non-trivial algebraic relation between the functions $t_\inj(H;\cdot)$ would imply a non-trivial algebraic relation between functions $t(H;\cdot)$. (In fact, it is shown in~\cite{MR538044} that the functions $t(H;\cdot)$ are independent in even stronger sense.) It follows that
\begin{equation}
\label{eq:whitney}
\K \cap \mathbb{R}[{\mathcal F}_{\emptyset}]= \{f \in \mathbb{R}[\mathcal{F}_\emptyset]: \mbox{$t(f;G) = 0$  for every graph $G$}\}.
\end{equation}
Therefore Theorem~\ref{thm:Lovasz} is equivalent to the existence of a positive quantum graph $x$ such that  for every collection  $g_1,\ldots,g_m \in \RF$, there exists a graph $G$ so that $$t(x;G) \neq t\left(\sum_{i=1}^m \llbracket  g_i^2  \rrbracket;G \right).$$

\subsection{Artin's theorem}
Note that Theorem~\ref{thm:Lovasz} reminisces Hilbert's classical theorem that there exists positive multivariate real polynomials that cannot be expressed as sums of squares of polynomials. Hilbert in the seventeenth problem of his celebrated list of open problems asked ``Given a multivariate polynomial that takes only non-negative values over the reals, can it be represented as a sum of squares of \emph{rational functions}?'' In 1927 Emil Artin~\cite{artin} answered this question in the affirmative. Note that Artin's theorem is equivalent to the fact that for every multivariate polynomial $p$ that takes only non-negative values over the reals, there exists polynomials $q \neq 0$ and $r$, each expressible as a sum of squares of polynomials, such that $q p =r$.  Our proof of Theorem~\ref{thm:Lovasz} relies on the above mentioned theorem of Hilbert. Hence it is very natural to wonder whether the analogue of Artin's theorem holds for quantum graphs. Indeed Lemma~\ref{lem:preArtin} below (proved in Section~\ref{sec:Artin}) shows that the validity of such a statement would imply a simple finitary characterization of positive quantum graphs.

\begin{lem}
\label{lem:preArtin}
If $x \in \A_{\emptyset}$ satisfies $gx=h$ for some positive $g,h \in \A_{\emptyset}$ with $g \neq 0$, then $x$ is positive.
\end{lem}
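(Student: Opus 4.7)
The plan is to reduce positivity of $x$ to a fact about real polynomials by interpolating a given graph $G$ with a reference graph $G_0$ on which $g$ does not vanish. Since $g\neq 0$ in $\A_{\emptyset}$, identity~(\ref{eq:whitney}) yields a graph $G_0$ with $t(g;G_0)\neq 0$, and then positivity of $g$ upgrades this to $t(g;G_0)>0$. For an arbitrary graph $G$, if $t(g;G)>0$ then multiplicativity of $t(\,\cdot\,;G)$ together with $gx=h$ gives $t(x;G) = t(h;G)/t(g;G) \geq 0$ immediately. The whole substance of the argument is to handle the degenerate case $t(g;G)=0$.

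To this end, for positive integers $a,b$, let $G_{a,b}$ be the disjoint union of an $a$-blow-up of $G$ and a $b$-blow-up of $G_0$. Blow-ups preserve homomorphism densities, and the standard formula for disjoint unions yields, for every connected graph $H$,
\begin{equation*}
t(H;G_{a,b}) = s^{|V(H)|}\,t(H;G) + (1-s)^{|V(H)|}\,t(H;G_0),\qquad s := \tfrac{a|V(G)|}{a|V(G)|+b|V(G_0)|}.
\end{equation*}
Extending multiplicatively over connected components and then linearly over quantum graphs, for every $f \in \RF$ one obtains a polynomial $p_f(t) \in \RR[t]$ (depending only on $f,G,G_0$) such that $p_f(s) = t(f;G_{a,b})$ for every attainable $s$. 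Multiplicativity of $t(\,\cdot\,;G_{a,b})$ combined with the identity $gx=h$ in $\A_{\emptyset}$ forces $p_g(t)\,p_x(t)=p_h(t)$ as an equality of polynomials. The attainable $s$-values are dense in $(0,1)$, so positivity of $g$ and $h$ combined with continuity yield $p_g,p_h \geq 0$ on $[0,1]$. Moreover $p_g(0)=t(g;G_0)>0$, so in particular $p_g\not\equiv 0$.

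At the endpoint $t=1$ one has $p_g(1) = t(g;G) = 0$ and $p_x(1) = t(x;G)$. Factor $p_g(t) = (1-t)^m q_g(t)$ with $q_g(1)\neq 0$; since $(1-t)^m\geq 0$ and $p_g\geq 0$ on $[0,1)$, this gives $q_g(1)>0$. Dividing the relation $p_h = p_g p_x$ by $(1-t)^m$ produces the polynomial $q_g(t)\,p_x(t) = p_h(t)/(1-t)^m$, which is $\geq 0$ on $[0,1)$ and hence also at $t=1$ by continuity. Therefore $q_g(1)\,p_x(1)\geq 0$, and dividing by $q_g(1)>0$ forces $p_x(1) = t(x;G) \geq 0$, as required. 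The main obstacle the argument must overcome is precisely the degenerate case $t(g;G)=0$, where bare multiplicativity fails; the blend-with-$G_0$ trick (made possible by~(\ref{eq:whitney})) and the ensuing polynomial factorization at $t=1$ are what neutralize this obstacle, after which everything reduces to elementary polynomial calculus on $[0,1]$.
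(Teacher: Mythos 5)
Your proof is correct, but it takes a genuinely different route from the paper's. The paper works in the graphon setting of Section~\ref{sec:Artin}: assuming $t(x;w)<0$ for some graphon $w$, it perturbs $w$ by a pointwise convex combination $\alpha w'+(1-\alpha)w$ with a graphon $w'$ on which $g$ does not vanish, uses the fact that $t(g;\alpha w'+(1-\alpha)w)$ is a polynomial in $\alpha$ that is not identically zero to find arbitrarily small $\alpha$ with $t(g;w'')>0$ while $t(x;w'')<0$ persists by continuity, and then contradicts the positivity of $h=gx$. You instead stay entirely within finite graphs, interpolating between $G$ and $G_0$ by disjoint unions of blow-ups rather than by convex combinations of kernels (these are different interpolations, though both make all densities polynomial in the blend parameter), and you argue directly rather than by contradiction: you pass to the polynomial identity $p_gp_x=p_h$ and extract $t(x;G)\geq 0$ at the endpoint $s=1$ by factoring out the root of $p_g$ there. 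What your approach buys is self-containedness --- it needs only \eqref{eq:whitney}, multiplicativity over disjoint unions, and invariance of $t$ under blow-ups, with no appeal to graph limits or to the extension of positivity from graphs to graphons; the cost is the extra endpoint factorization, which the paper's perturb-and-contradict setup sidesteps by never having to evaluate at the degenerate point. Two cosmetic remarks: your polynomial family should be indexed by $f\in\mathbb{R}[\mathcal{F}_\emptyset]$ (unlabeled quantum graphs) rather than $f\in\RF$, since $t(f;G)$ without a map $\phi$ is only defined there; and the component formula for disjoint unions should implicitly exclude the empty graph, which is harmless under the usual empty-product convention.
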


In Section~\ref{sec:Lovasz} we prove the following theorem which shows that the analogue of Artin's theorem for quantum graphs does not hold.

\begin{thm}
\label{thm:Artin}
There exists positive $x \in \A_{\emptyset}$ such that there are no $g,h \in \A_{\emptyset}$, each expressible as a sum of squares, with $g \neq 0$, so that $g x = h$.
\end{thm}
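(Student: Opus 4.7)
The plan is to deduce Theorem~\ref{thm:Artin} from the undecidability result Theorem~\ref{thm:undecide} by a recursion-theoretic argument. If every positive quantum graph admitted an Artin-type representation $gx = h$ with $g,h$ sums of squares and $g \neq 0$, such representations would furnish an effective proof system for positivity of rational quantum graphs, and combined with the trivial refutation system for non-positivity this would render positivity decidable, contradicting Theorem~\ref{thm:undecide}.

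To set the stage, I would first record two facts that do not use the Artin hypothesis. First, non-positivity of $x \in \A_\emptyset$ with rational coefficients is recursively enumerable: enumerate finite simple graphs $G$ in some canonical order, compute the rational number $t(x;G)$, and halt as soon as it is negative. Second, equality in $\A_\emptyset$ is decidable for rational elements: from the definition of $\K$, two rational combinations of graphs agree modulo $\K$ if and only if their canonical forms, obtained by deleting all isolated vertices and collecting like terms, coincide.

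Next I would suppose, for contradiction, that every positive $x \in \A_\emptyset$ admits $g,h \in \A_\emptyset$, each a sum of squares of labeled quantum graphs, with $g \neq 0$ and $gx = h$. Fix a rational $x$. For each complexity bound $N$ on the number of square summands, the number of vertices in each labeled graph, and the size of the label set, the existence of such a certificate becomes a first-order existential sentence over the real closed field with rational parameters derived from $x$: the coefficients of the candidate labeled quantum graphs $g_i,h_j$ are the existentially quantified real unknowns; the conditions $g \neq 0$ and $gx=h$ translate, via the canonical form above, into a Boolean combination of polynomial (in)equalities in these unknowns; and the SOS shape is automatic from the construction $g = \sum_i \llbracket g_i^2 \rrbracket$, $h = \sum_j \llbracket h_j^2 \rrbracket$. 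By Tarski's decidability of the theory of real closed fields, each such sentence is decidable. Iterating $N$ through $\mathbb{N}$ therefore semi-decides positivity of rational $x$: soundness is Lemma~\ref{lem:preArtin}, and completeness is exactly the contrary hypothesis.

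Combining the two semi-decision procedures yields a decision procedure for positivity of rational quantum graphs, contradicting Theorem~\ref{thm:undecide}. The main obstacle I anticipate is a bookkeeping one: to carry out the Tarski step cleanly, one must explicitly realize the equation $gx=h$ in $\A_\emptyset$ as a finite list of polynomial equations in the coefficients of the $g_i,h_j$. This reduces to fixing, for each $N$, a finite spanning family of unlabeled graphs into which $gx - h$ expands after reduction modulo $\K$, and reading off its coordinates as polynomials in the real unknowns; once this is in place, the rest of the argument is purely formal.
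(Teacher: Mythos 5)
Your proposal is correct and follows essentially the same route as the paper: assume the Artin representation always exists, enumerate candidate sum-of-squares certificates of bounded complexity, reduce the condition $gx=h$, $g\neq 0$ to a computable existential sentence over the reals decidable by Tarski's theorem, run this in parallel with a search for a graph witnessing $t(x;G)<0$, and invoke Lemma~\ref{lem:preArtin} for soundness, contradicting Theorem~\ref{thm:undecide}. The one point you gloss --- that equality in $\A_\emptyset$ is decidable ``from the definition of $\K$'' --- actually requires the linear (indeed algebraic) independence of connected graphs in $\A_\emptyset$, i.e.\ Whitney's theorem as recorded in~(\ref{eq:whitney}), which is exactly what the paper invokes at the corresponding step.
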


Note that Theorem~\ref{thm:Artin} implies Theorem~\ref{thm:Lovasz}. However since our proof of Theorem~\ref{thm:Artin} is based on the undecidability result proved in Theorem~\ref{thm:undecide} below and hence does not provide any explicit examples, we give a separate constructive proof of Theorem~\ref{thm:Lovasz}.

\begin{remark}
\label{rem:prob21}
In~\cite{openProblems} Problem 21, Lov\'asz asked ``Is it true that for every positive quantum graph $x$, there exist quantum graphs $g$ and $h$, each expressible as a sum of squares of labeled quantum graphs, so that $x + gx = h$?'' It follows from Theorem~\ref{thm:Artin} that the answer to this question is also negative.
\end{remark}
\vskip 10pt

\subsection{Razborov's Cauchy-Schwarz Calculus}

Consider $f_1,f_2 \in \A_L$, and a subset $T \subseteq L$. Razborov  (\cite{MR2371204} Theorem~3.14) has shown that
\begin{equation}
\label{eq:RazCauchy}
\llbracket f_1^2\rrbracket_T \llbracket f_2^2\rrbracket_T \ge \llbracket f_1f_2\rrbracket_T^2.
\end{equation}
The \emph{Cauchy-Schwarz calculus} is defined in~\cite{MR2371204} in the language of flag algebras, but can be reformulated as follows.

\begin{definition}
\label{def:CS}
The Cauchy-Schwarz calculus  operates with statements of the form $f \geq 0$ with $f \in \A$ and has axioms
\begin{itemize}
\item [{\bf A1: }] $f^2 \geq 0$ for every $f$;
\item [{\bf A2: }] $\llbracket f_1^2\rrbracket_T \llbracket f_2^2\rrbracket_T - \llbracket f_1f_2\rrbracket_T^2 \geq 0$ for $f_1,f_2$ and $T$ as in~(\ref{eq:RazCauchy}).
\end{itemize}
The inference rules of the Cauchy-Schwarz calculus are
\begin{itemize}
\item[{\bf R1: }] $$ \frac{f \geq 0 \qquad g\geq 0}{\alpha f + \beta g \geq 0}\; (\alpha,\beta \geq 0),  $$
\item[{\bf R2: }] $$ \frac{f \geq 0 \qquad g\geq 0}{fg \geq 0}, $$
\item[{\bf R3: }] $$\frac{f \geq 0}{\llbracket f \rrbracket_T \geq 0}\; (T \subseteq \mathbb{N}),$$
\end{itemize}
for $f,g \in \A$.
We say that $f \in \A$ is \emph{CS-positive} if the statement $f \geq 0$ is provable in the Cauchy-Schwarz calculus.
\end{definition}

The original definition of the Cauchy-Schwarz calculus in~\cite{MR2371204} appears to differ from the one presented here. However as we shall discuss in Appendix~\ref{appendix}, the two definitions are equivalent in that a statement can be proven in the original calculus if and only if it can be proven using the one stated in Definition~\ref{def:CS}.

Answering a question of Razborov, in Section~\ref{sec:Lovasz} we show that the Cauchy-Schwarz calculus is not complete by proving the following theorem.

\begin{thm}
\label{thm:Razborov}
There exists a positive $f \in \A_{\emptyset}$ which is not CS-positive.
\end{thm}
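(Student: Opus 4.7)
The plan mirrors the approach to Theorem~\ref{thm:Lovasz}: reduce the problem to exhibiting a positive multivariate real polynomial whose positivity admits no algebraic certificate of the restricted form forced by a CS-proof, and then invoke a classical obstruction from real algebraic geometry. Since CS-positivity is strictly more powerful than expressibility as a sum of squares (one can derive such sums via A1, R1, R3 alone, but A2 and R2 give genuinely more), the polynomial used for Theorem~\ref{thm:Lovasz} will not suffice; we need an example robust against the extra Cauchy--Schwarz axiom as well.

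First, fix distinct connected graphs $J_1, \ldots, J_n$ with more than one vertex, and define the $\mathbb{R}$-algebra homomorphism $\Phi: \mathbb{R}[x_1,\ldots,x_n] \to \A_{\emptyset}$ by $\Phi(x_i) := J_i$. Whitney's algebraic independence result (cited just before (\ref{eq:whitney})) together with (\ref{eq:whitney}) imply that $\Phi$ is injective and satisfies $t(\Phi(p);G) = p(t(J_1;G), \ldots, t(J_n;G))$. Letting $R \subseteq \mathbb{R}^n$ be the closure of the image of $G \mapsto (t(J_1;G),\ldots, t(J_n;G))$ over finite graphs, one sees that $\Phi(p)$ is a positive quantum graph if and only if $p \geq 0$ on $R$.

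The heart of the plan is a \emph{translation lemma}: if $\Phi(p) \geq 0$ has a CS-proof, then $p$ admits an explicit Positivstellensatz-style certificate of non-negativity on $R$ drawn from a specific bounded class. I would prove this by induction on CS-derivations. Axiom A1 and rule R1 translate to sums of squares and positive combinations in the polynomial ring; rule R2 to products of certificates. The subtle steps are rule R3 (unlabeling $\llbracket \cdot \rrbracket_T$) and axiom A2 (labeled Cauchy--Schwarz), because they manipulate labeled intermediate quantum graphs that are not directly in the image of $\Phi$. To handle them I would enlarge the reduction, introducing labeled variants $J_i^{(\ell)}$ and working in an auxiliary polynomial ring whose extra variables track the contributions of labeled vertices, so that labeled products remain polynomial operations and $\llbracket \cdot \rrbracket_T$ corresponds to a controlled averaging over those extra variables. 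The output is a normal-form certificate for $p$ on $R$ built from squares, products, positive combinations, and CS-style cross-term inequalities, whose combinatorial complexity is bounded in terms of the CS-proof.

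The final step is to invoke a polynomial $p$ that is non-negative on $R$ but admits no certificate in this restricted class; a natural candidate is a Motzkin- or Robinson-type positive-but-not-SOS polynomial, combined with a choice of $J_1,\ldots,J_n$ making $R$ full-dimensional in $\mathbb{R}^n$, so that obstructions over $\mathbb{R}^n$ propagate to $R$. Then $\Phi(p)$ is positive but not CS-positive. The main obstacle is the translation lemma, specifically the analysis of R3 and A2: unlabeling mixes information across vertex permutations, and one must choose the encoding and auxiliary ring so tightly that no additional algebraic structure ``leaks'' into the certificate, while still keeping the class narrow enough to be obstructed by the classical polynomial examples.
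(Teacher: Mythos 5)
Your proposal is a plan whose decisive step --- the ``translation lemma'' --- is precisely the part you leave open, and the route you sketch for it (a syntactic induction on CS-derivations encoded in an auxiliary polynomial ring with extra variables tracking labels) is not what is needed and would be very hard to close. The paper's proof avoids any symbolic bookkeeping of the derivation. The key idea is semantic: fix once and for all the evaluation at a single stringent graph $H$ on $k$ vertices equipped with vertex weights $\by$, and consider the class of $f \in \A$ such that $t(f;H,\by,\phi)$ is, for every $\phi$, of the form $\sum_i p_i(\by)q_i(\by)^2$ with each $p_i$ having non-negative coefficients (the ``$G$-sos'' property). Lemma~\ref{lem:key} shows that this class contains both axioms A1 and A2 and is closed under R1, R2 and R3; the only two non-obvious points are that A2 is absorbed via a Lagrange-type identity expressing $\llbracket f^2\rrbracket_T\llbracket g^2\rrbracket_T-\llbracket fg\rrbracket_T^2$ as such a weighted sum of squares, and that unlabeling $\llbracket\cdot\rrbracket_T$ is just a summation over extensions $\psi$ of $\phi$ weighted by the monomials $\prod_{v} y_{\psi(v)}$, i.e.\ a non-negative-coefficient polynomial operation. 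Hence every CS-positive element is $G$-sos for every $G$, and the same $x=\llbracket\varphi_H(p)\rrbracket$ from Theorem~\ref{thm:Lovasz} finishes the job: the substitution (\ref{eq:choiceOfY}) converts an $H$-sos certificate into $(x_1^2+\cdots+x_k^2)^N\, x_1^2\cdots x_k^2\, p(x_1^2,\ldots,x_k^2)\in\Sigma_k$, contradicting the bad-point property of Lemma~\ref{lem:L2}.

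Two concrete defects in your setup. First, the reduction $\Phi(x_i):=J_i$ with $J_i$ unlabeled connected graphs is already insufficient even for Theorem~\ref{thm:Lovasz}: a representation $\Phi(p)=\sum\llbracket g_i^2\rrbracket$ involves, after unlabeling, connected graphs lying outside the subalgebra generated by $J_1,\ldots,J_n$, so Whitney independence and the non-SOS-ness of $p$ in $\mathbb{R}[x_1,\ldots,x_n]$ place no restriction on such representations. This is exactly why the paper routes everything through $\varphi_H$, the stringency of $H$, and the evaluation $t(\cdot;H,\by)$, which collapses squares of labeled quantum graphs into squares of polynomials in the vertex weights. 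Second, your premise that the example from Theorem~\ref{thm:Lovasz} ``will not suffice'' against the extra Cauchy--Schwarz axiom is mistaken: because the $G$-sos class is closed under A2 and R2, the identical polynomial and the identical bad-point obstruction dispose of the full calculus, and no stronger example or certificate-complexity analysis is required.
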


\subsection{A tenth problem: Undecidability}
The proofs of both Theorems~\ref{thm:Lovasz} and~\ref{thm:Razborov} rely on reductions from the setting of multivariate polynomials. It follows from Artin's solution to Hilbert's 17th problem~(see Theorem 2.1.12 in \cite{MR1829790}) that every positive $p \in \mathbb{Q}[x_1,\ldots,x_n]$ can be expressed as a sum of the form $\sum_{i=1}^m a_i q_i^2$, where $a_i \in \mathbb{Q}_{+}$ and $q_i \in \mathbb{Q}(x_1,\ldots,x_n)$ for every $1 \le i \le m$.  Note that this in particular shows that the problem of determining whether a multivariate polynomial with rational coefficients is positive or not is decidable. Indeed given such a polynomial, one can search for expressing it as a sum of the form $\sum_{i=1}^m a_i q_i^2$ (there are countably many of those sums), and in parallel check its non-negativity on rational points. Eventually either one will find a way to express the polynomial as a sum of squares of rational functions or a rational point on which the polynomial takes a negative value will be found. The decidability of positivity of a polynomial with rational coefficients follows also from the well-known work of Tarski~\cite{MR0028796}. In the following theorem we show that the problem of determining the positivity of a quantum graph is undecidable.

\begin{thm}
\label{thm:undecide}
The following problem is undecidable.
\begin{itemize}
 \item {\sc instance:} A positive integer $k$, finite graphs $H_1,\ldots,H_k$, and integers $a_1,\ldots,a_k$.
 \item {\sc question:} Does the inequality $a_1t(H_1;G)+\ldots+a_k t(H_k;G) \ge 0$ hold for every graph $G$?
\end{itemize}
\end{thm}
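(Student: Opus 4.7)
The plan is to reduce from Hilbert's tenth problem. By Matiyasevich's theorem, it is undecidable to determine whether a polynomial $p\in\mathbb{Z}[x_1,\ldots,x_n]$ admits a zero in $\mathbb{N}^n$, or equivalently whether $p(x_1,\ldots,x_n)^2\ge 1$ holds for every $(x_1,\ldots,x_n)\in\mathbb{N}^n$. I associate, computably in $p$, a quantum graph $x_p\in\mathbb{R}[\mathcal{F}_\emptyset]$ --- equivalently, a finite list of graphs $H_i$ and integer weights $a_i$ --- whose positivity on every graph $G$ is equivalent to $p$ having no natural-number zero; combined with Matiyasevich's theorem this yields the claim.

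The first ingredient is an \emph{encoding} of $\mathbb{N}^n$ into graphs. Fix $n$ pairwise non-isomorphic connected ``markers'' $F_1,\ldots,F_n$ (chosen so that $F_i$ does not embed into the blowup of $F_j$ for $j\ne i$) and let $G_{x_1,\ldots,x_n}$ be the disjoint union of the $x_i$-blowups of the $F_i$. Then each density $t(H;G_{x_1,\ldots,x_n})$ is an explicit rational function of $x_1,\ldots,x_n$, and combining finitely many densities through the product rule~(\ref{eq:product}) and Whitney's algebraic independence (used via~(\ref{eq:whitney})) one can realize, for any prescribed $Q\in\mathbb{Z}[x_1,\ldots,x_n]$, a quantum graph $q_Q\in\mathbb{R}[\mathcal{F}_\emptyset]$ whose value on every encoded graph $G_{x_1,\ldots,x_n}$ equals $Q(x_1,\ldots,x_n)$ times a strictly positive known factor. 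Applying this construction to $Q:=p^2-1$ produces $q_p$, which is non-negative on every encoded graph exactly when $p$ has no root in $\mathbb{N}^n$.

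The second ingredient is a \emph{rigidity} quantum graph $R\in\mathbb{R}[\mathcal{F}_\emptyset]$ with $t(R;G)\ge 0$ for every $G$, and with $t(R;G)=0$ precisely when $G$ is (a blowup of) an encoded graph. I assemble $R$ as a finite combination of sum-of-squares terms $\llbracket g_j^2\rrbracket$ from Section~\ref{sec:prel}, each vanishing on the encoded graphs while penalizing one concrete local deviation: edge density in a block not in $\{0,1\}$, cross-edges between blocks carrying distinct markers, failure of the $F_i$-pattern within a block, and so on. Once $R$ has the right zero set and a uniform quantitative lower bound off it, an explicit integer multiplier $M$ renders $x_p:=M\cdot R+q_p$ a finite integer linear combination of homomorphism densities, with the desired equivalence: on encoded $G$ the inequality $t(x_p;G)\ge 0$ reduces to $p(x_1,\ldots,x_n)^2\ge 1$, while on non-encoded $G$ it holds automatically because $M\,t(R;G)$ dominates $|t(q_p;G)|$.

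The main obstacle is the construction of $R$: producing \emph{finitely many} sums-of-squares whose joint zero set on graphon space coincides with the image of the encoding and whose combined lower bound away from that image dominates $|t(q_p;G)|$. I expect this to hinge on a rigidity statement for the markers --- every graphon close in cut norm to some $G_{x_1,\ldots,x_n}$ is globally close to some $G_{x'_1,\ldots,x'_n}$ --- together with a compactness argument in graphon space that upgrades pointwise positivity of $R$ off the encoding into a uniform positive lower bound. Once this rigidity step is in place, choosing $M$ and verifying that $p\mapsto x_p$ is computable are routine.
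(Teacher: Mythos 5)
Your reduction from Matiyasevich is the right starting point (the paper does the same, via Lemma~\ref{lem:Hilbert10}), but the encoding step has a fatal flaw: homomorphism densities are invariant under uniform blowups, so $t(H;G_{x_1,\ldots,x_n})$ depends only on the \emph{ratios} $x_i|V(F_i)|/\sum_j x_j|V(F_j)|$, never on the integers $x_i$ themselves. In particular $G_{x_1,\ldots,x_n}$ and $G_{2x_1,\ldots,2x_n}$ have identical density profiles, so no quantum graph $q_Q$ can evaluate to $Q(x_1,\ldots,x_n)$ times a positive factor (the signs of $Q(x)$ and $Q(2x)$ can differ), and the achievable ratio vectors are dense in the simplex, so integrality is simply invisible to your encoding. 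This is exactly the obstacle the paper flags before Theorem~\ref{thm:bollobas}: the value set of any single density is dense in $[0,1]$. The missing idea is a mechanism by which a \emph{valid density inequality} can detect the discrete set $\mathbb{N}$; the paper's mechanism is that the edge densities of cliques form the set $\{1-1/n\}$, which is precisely the tightness locus of Goodman's bound $t(K_3;\cdot)\ge g(t(K_2;\cdot))$, certified by Bollob\'as's piecewise-linear improvement $L>g$ off that locus. The polynomial variables are then recovered as $x_i=t(K_2;U_i)=1-1/n_i$ from clique blowups of a stringent graph, with stringency forcing the evaluation to land exactly at these lattice points (Claim~\ref{claim:claim2}). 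Without some such integrality detector your reduction cannot get off the ground.

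The rigidity term $R$ has a second, independent gap. Even granting a positive $R$ vanishing exactly on encoded graphons, a \emph{fixed} multiplier $M$ with $M\,t(R;G)\ge|t(q_p;G)|$ everywhere requires $t(q_p;\cdot)$ to vanish at least as fast as $t(R;\cdot)$ near the zero set of $R$; compactness only gives you a positive lower bound for $R$ on closed sets bounded away from its zero set, and says nothing about the ratio $|t(q_p;G)|/t(R;G)$ as $G$ approaches an encoded graph. This comparison is genuinely delicate and is where the real work lies: the paper's Lemma~\ref{lem:calculus} carries it out explicitly, balancing the derivative bound $\bigl|\partial_{x_i}\bigl(p\prod_i(1-x_i)^6\bigr)\bigr|\le \tfrac{7M}{100}(1-x_i)^5$ against the quadratic growth of the penalty $L(x_i)-g(x_i)\ge \tfrac{1}{10t_i^4}$ away from the points $1-1/t_i$. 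An appeal to cut-norm rigidity plus compactness cannot replace this quantitative step.
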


For two graphs $H$ and $G$, let $\hom(H;G)$ denote the number of homomorphisms from $H$ to $G$. Note that $\hom(H;G) = |V(G)|^{|V(H)|} t(H;G)$. In~\cite{Rossman} it is observed that the undecidability of the following problem follows from the undecidability of a similar problem in database theory~\cite{211419}. Given a positive integer $k$, finite graphs $H_1,\ldots,H_k$, and integers $a_1,\ldots,a_k$, it is undecidable whether the following inequality holds for all graphs $G$: $$a_1\hom(H_1;G)+\ldots+a_k \hom(H_k;G) \ge 0.$$
Note that  Theorem~\ref{thm:undecide} in particular implies this result. Indeed since $t(H;G) = \frac{\hom(H;G)}{\hom(K_1;G)^{|V(H)|}}$ for all graphs $H$ and $G$, it is possible to  express any linear inequality in homomorphism densities as an algebraic inequality in homomorphism numbers which in turn can be converted into a linear inequality.

\section{Some auxiliary facts}\label{sec:auxiliary}

We start by defining some new notations and proving auxiliary facts.
The \emph{induced homomorphism density} of $H$ in $G$, denoted by $t_\ind(H;G)$, is the probability that a random map  from the vertices of $H$ to the vertices of $G$ preserves both adjacency and non-adjacency. The two functions $t(H,\cdot)$ and $t_\ind(H,\cdot)$ are related by
\begin{equation}
\label{eq:partialOrder}
t(H;\cdot) = \sum_{\substack{F \supseteq H \\ V(F)=V(H)}}  t_\ind(F;\cdot),
\end{equation}
and a M\"obius inversion formula
\begin{equation}
\label{eq:Mobius}
t_\ind(H;\cdot) = \sum_{\substack{F \supseteq H \\ V(F)=V(H)}}  (-1)^{|E(F) \backslash E(H)| } t(F;\cdot).
\end{equation}
For a partially labeled graph $H$, define the quantum graph
$$\ind(H) := \sum_{\substack{F \supseteq H \\ V(F)=V(H)}} (-1)^{|E(F) \backslash E(H)| }  F.$$
The labeled quantum graphs $\ind(H)$ enjoy certain orthogonality properties. Indeed if the restriction of  two partially labeled graphs $H_1, H_2 \in {\mathcal F}_L$ to the labeled vertices are different, then we have
\begin{equation}
\label{eq:orthogonality}
\ind(H_1) \cdot \ind(H_2) = 0.
\end{equation}

In Section~\ref{sec:prel} we defined the homomorphism density of a graph $H$ in a graph $G$. Sometimes we shall work in a slightly more general setting that allows $G$ to have a non-uniform distribution on its set of vertices. More precisely, let $H$ be a partially labeled graph with the set of labels $L \subset \mathbb{N}$. Let $G$ be another graph, $\by$ be a probability measure on the vertices of $G$ and $\phi: L \to V(G)$ be a map. Define the random mapping $h$ from the vertices of $H$ to the vertices of $G$ by mapping every unlabeled vertex of $H$ to a vertex of $G$ independently and according to the probability measure $\by$, and mapping the labeled vertices according to $\phi$. Then $t(H;G, \by,\phi)$ is the probability that $h$ defines a homomorphism from $H$ to $G$. Also $t_\inj(H;G, \by,\phi)$ and $t_\ind(H;G, \by,\phi)$ are defined similarly.

\begin{remark}\label{rem:rem1}
Consider a graph $G$ and a probability measure $\by$ on $V(G)$. For every positive integer $n$, construct the graph $G_n$ in the following way. For every vertex $v$ in $V(G)$, put $\lfloor \by(v) n \rfloor$ ``copies'' of $v$  in $G_n$. There is an edge between two vertices in $G_n$ if and only if they are copies of adjacent vertices in $G$. It is easy to see that for every graph $H$, we have $\lim_{n \rightarrow \infty} t(H;G_n) = t(H;G,\by)$. Hence a quantum graph $f$ is positive, if and only if $t(f;G,\by) \ge 0$ for every graph $G$ and every probability distribution $\by$ on $V(G)$. Similarly it is easy to see that a labeled quantum graph $f$ is positive, if and only if $t(f;G, \by,\phi)$ is always non-negative.
\end{remark}

Consider a graph $H$ with $V(H)=[k]$. For every  $j \in [k]$, let $H_j \in {\mathcal F}_{[k]}$ be obtained from $H$ by adding an unlabeled clone $v$ of the vertex $j$ to this graph, i.e. $v$ is adjacent exactly to the neighbors of the vertex $j$. Let $H_j'$ be obtained from $H_j$ by adding the edge $\{v,j\}$.  Then  $\varphi_H:\RR[x_1,\ldots,x_k] \rightarrow  \RR[\mathcal{F}_{[k]}]$ is the unique algebra homomorphism defined by $\varphi_H(x_j) := \ind(H_j) +  \ind(H_j')$, for every $j \in [k]$.

Consider a graph $G$ and a probability measure $\by$ on $V(G)$. Let $S$ denote the set of all maps $h:V(H) \to V(G)$ that preserve both adjacency and non-adjacency. Consider a map $\phi:V(H) \rightarrow V(G)$. If $\phi \not\in S$, then $t(\varphi_H(x_j);G, \by,\phi)=0$, for every $j \in [k]$. Hence in this case  $t(\varphi_H(p);G, \by,\phi)=0$, for every polynomial $p \in \RR[x_1,\ldots,x_k]$.

Next consider a map $\phi \in S$. Then for every $j \in [k]$, $t(\varphi_H(x_j);G, \by,\phi)$ is the probability that a random (according to $\by$) extension $\psi$ of $\phi$ in $H_j$ preserves  the adjacencies and non-adjacencies except maybe between $j$ and its clone. For every $j \in [k]$, let $\alpha_j(\phi)$ be the probability that the map obtained from $\phi$ by replacing $\phi(j)$ by a random vertex in $G$ chosen according to the probability measure $\by$ belongs to $S$. Since the unlabeled vertex of $H_j$ is a clone of the vertex $j$, we have $t(\varphi_H(x_j);G, \by,\phi) = \alpha_j(\phi)$.

We conclude that  for every map $\phi:[k] \rightarrow V(G)$, and every polynomial $p \in \RR[x_1,\ldots,x_k]$,
\begin{equation}
\label{eq:tHpG}
t(\varphi_H(p);G, \by,\phi) = \left\{ \begin{array}{lcl} p(\alpha_1(\phi),\ldots,\alpha_k(\phi)) & \qquad & \phi \in S\\
0 &&  \phi \not\in S \end{array} \right.
\end{equation}

 The next lemma follows immediately from (\ref{eq:tHpG}).

\begin{lem}\label{lem:L1}
Given a graph $H$ with $V(H)=[k]$  and a positive polynomial $p$ in $k$ variables, the labeled quantum graph $\varphi_H(p)$ is positive.
\end{lem}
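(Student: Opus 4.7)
The plan is to deduce the lemma directly from the formula~(\ref{eq:tHpG}) together with Remark~\ref{rem:rem1}, which reduces positivity of a labeled quantum graph to the non-negativity of $t(\cdot;G,\by,\phi)$ over all triples $(G,\by,\phi)$ where $G$ is a graph, $\by$ is a probability measure on $V(G)$, and $\phi:[k]\to V(G)$.

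First I would fix an arbitrary such triple $(G,\by,\phi)$ and apply~(\ref{eq:tHpG}) to $p$ at this triple. The formula splits into two cases depending on whether the labeling $\phi$ lies in $S$, the set of maps preserving adjacency and non-adjacency of $H$. If $\phi\notin S$, then by~(\ref{eq:tHpG}) we have $t(\varphi_H(p);G,\by,\phi)=0$, which is clearly non-negative. If $\phi\in S$, then by~(\ref{eq:tHpG}) we have $t(\varphi_H(p);G,\by,\phi)=p(\alpha_1(\phi),\ldots,\alpha_k(\phi))$, which is non-negative because $p$ is positive (i.e., non-negative on all of $\mathbb{R}^k$) and $(\alpha_1(\phi),\ldots,\alpha_k(\phi))$ is a point of $\mathbb{R}^k$ (in fact of $[0,1]^k$).

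Since in either case $t(\varphi_H(p);G,\by,\phi)\geq 0$, and the triple $(G,\by,\phi)$ was arbitrary, Remark~\ref{rem:rem1} yields that $\varphi_H(p)$ is a positive labeled quantum graph. There is no real obstacle here: the content of the lemma lies entirely in the construction of the homomorphism $\varphi_H$ and in the identity~(\ref{eq:tHpG}), both of which are already in hand; the present lemma is only packaging these ingredients.
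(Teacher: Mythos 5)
Your proof is correct and is exactly the argument the paper intends: the lemma is stated to follow immediately from~(\ref{eq:tHpG}), and your case split on $\phi\in S$ versus $\phi\notin S$, combined with the non-negativity of $p$ at the point $(\alpha_1(\phi),\ldots,\alpha_k(\phi))$, is that deduction spelled out.
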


Let $P_k$ denote the set of all positive \emph{homogenous} polynomials in $k$ variables. Let $\Sigma_k$ denote the set of those homogenous polynomials in $k$ variables that  can be expressed as sums of squares of polynomials with real coefficients. Clearly $\Sigma_k \subseteq P_k$. Let $\Delta_k = P_k \backslash  \Sigma_k$. Hilbert~\cite{MR1510539} showed that for $k \ge 3$, $\Delta_k$ is not empty.
%In his 1900 address to the International Congress of Mathematicians, Hilbert posed a generalization of his results as the 17th Problem: Can every polynomial in $P_k$  be expressed as a sum of squares of \emph{rational functions}? Artin~\cite{artin} answered this question in the affirmative. Note that Artin's theorem is equivalent to the statement that for every polynomial $p \in P_k$, there exists a non-zero polynomial $q$ such that $q^2 p \in \Sigma_k$.
An $x_0 \in \RR^k$ is called~\cite{MR1747589} a \emph{bad point} for polynomial $p \in P_k$, if $x_0$ is a root of every polynomial $q$ such that $q^2p \in \Sigma_k$.

\begin{lem}\label{lem:L2}
For every $k \geq 4$, there exists an even $p \in \Delta_k$  such that $x^2_1x^2_2 \ldots x_k^2$ divides $p$ and $(1,0,\ldots,0)$ is a bad point for $p$.
\end{lem}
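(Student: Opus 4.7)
My plan is constructive: I would take the Motzkin form $M(x,y,z) := x^4y^2 + x^2y^4 + z^6 - 3x^2y^2z^2 \in \Delta_3$ (positive by AM--GM, not a sum of squares by the classical Hilbert--Motzkin argument) and set
$$p(x_1,\ldots,x_k) := x_1^2 x_2^2 \cdots x_k^2 \cdot M(x_2, x_3, x_4).$$
All of the ``geometric'' requirements then follow immediately from the construction: $p$ is homogeneous of degree $2k+4$, every variable appears only with even exponents (so $p$ is even), divisibility by $x_1^2 x_2^2 \cdots x_k^2$ is transparent, and $p$ is PSD as the product of a squared monomial and the PSD form $M$. Hence only two substantive things remain to be verified, namely that $p \notin \Sigma_k$ and that $(1,0,\ldots,0)$ is a bad point.

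For $p \notin \Sigma_k$ I would invoke the following standard divisibility observation: if $x_j \mid \sum_i f_i^2$ in $\RR[x_1,\ldots,x_k]$, then $x_j \mid f_i$ for every $i$ (setting $x_j = 0$ makes $\sum_i f_i(\ldots,0,\ldots)^2$ vanish identically, so each summand must vanish). Iterating over $j = 1, \ldots, k$, any hypothetical decomposition $p = \sum_i f_i^2$ would give $x_1 x_2 \cdots x_k \mid f_i$, and dividing through would produce an SOS representation of $M(x_2, x_3, x_4)$ inside $\RR[x_2, x_3, x_4]$, contradicting $M \in \Delta_3$.

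The main step, and the one requiring care, is the bad-point argument. Suppose for contradiction that $q^2 p = \sum_j g_j^2 \in \Sigma_k$ with $c := q(1, 0, \ldots, 0) \neq 0$. Substitute $x_1 = 1$ and work in $\RR[x_2, \ldots, x_k]$: note that $p(1, x_2, \ldots, x_k) = x_2^2 \cdots x_k^2 \, M(x_2, x_3, x_4)$ is already homogeneous of degree $2k+4$, while $q(1, x_2, \ldots, x_k)$ has nonzero constant term $c$. I would then extract the lowest-degree homogeneous component (the initial form at the origin of $\RR^{k-1}$) of both sides of $q(1,\cdot)^2 p(1,\cdot) = \sum_j g_j(1,\cdot)^2$. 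On the left one obtains $c^2 \cdot x_2^2 \cdots x_k^2 \, M(x_2, x_3, x_4)$, and on the right one obtains $\sum_{j \in J} (g_j^{(o_j)})^2$, where $J$ indexes those $g_j(1,\cdot)$ of minimum order of vanishing $o_j$ and $g_j^{(o_j)}$ is the lowest nonzero homogeneous piece of $g_j(1,\cdot)$. In particular, the right-hand side is a sum of squares in $\RR[x_2,\ldots,x_k]$, so (rescaling by $1/c^2$) we obtain $x_2^2 \cdots x_k^2 \, M \in \Sigma_{k-1}$, and applying the divisibility trick of the previous paragraph one more time forces $M \in \Sigma_3$, the desired contradiction.

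The crux of the proof is thus the initial-form step: one must verify that the lowest-degree homogeneous component of a real sum of squares is itself a sum of squares, with no cross-stratum cancellation possible at the bottom level. Once this is granted, the entire lemma reduces to routine bookkeeping with the Motzkin form together with repeated use of the same divisibility trick.
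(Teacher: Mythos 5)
Your proof is correct and follows essentially the same route as the paper: the paper takes $p = x_1^2\cdots x_k^2\,S(x_2,x_3,x_4)$ with $S(x,y,z)=x^4y^2+y^4z^2+z^4x^2-3x^2y^2z^2$ and, for the bad-point claim, extracts the top $x_1$-degree component of the identity $q^2p=\sum q_i^2$ rather than dehomogenizing at $x_1=1$ and taking lowest-order initial forms, but these are the same computation, and both then finish with the identical divisibility trick. Your use of the Motzkin form in place of $S$ and your separate verification that $p\notin\Sigma_k$ (already implied by the bad-point property via $q=1$) are harmless variations.
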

\begin{proof}
Consider the polynomial $p:= x^2_1x^2_2 \ldots x_k^2 S(x_2,x_3,x_4)$, where $S(x,y,z)$ is an even homogeneous polynomial in $\Delta_3$, e.g. $S(x,y,z) := x^4 y^2 + y^4 z^2 + z^4 x^2  - 3 x^2 y^2 z^2$. A short argument that this particular polynomial belongs to $\Delta_3$ is provided for example in~\cite[p. 519]{MR2104929}.

Note that $p$ is  trivially positive. Suppose for the contradiction that $q^2 p = \sum_{i=1}^m q_i^2$ for polynomials $q_i$ while $q(1,0,\ldots,0)=c \neq 0$ for a polynomial $q$ of degree $d$. Note that $q(1,0,\ldots,0)=c$ implies that the monomial $x_1^d$ appears with coefficient $c$ in $q$. Hence the component of $x_1^{2d+2}$ in $q^2 p$ is $c^2 \left(x^2_2 \ldots x_k^2\right) S(x_2,x_3,x_4)$. Denoting by $\overline{q}_i$ the component of $x_1^{d+1}$ in $q_i$, we must have $c^2 \left(x^2_2 \ldots x_k^2\right) S(x_2,x_3,x_4) = \sum_{i=1}^m \overline{q}_i^2$. Note that as the left side of the equality is divisible by $x_j$ for each $2 \leq j \leq k$ we can readily see (by setting $x_j=0$) that every $\overline{q}_i$ is divisible by every $x_j $ and consequently by $x_2\ldots x_k$. It now follows that  $S(x_2,x_3,x_4) \in \Sigma_3$, which is a contradiction.
\end{proof}

\section{Proofs of Theorems~\ref{thm:Lovasz}~and~\ref{thm:Razborov}}\label{sec:Lovasz}

For finite $L \subset \mathbb{N}$, $f \in \mathbb{R}[\F_L]$ and a graph $G$ on $n$ vertices we say that $f$ is \emph{$G$-sos} if for every $\phi: L \to V(G)$ there exists polynomials $p_1,\ldots,p_m \in \mathbb{R}[y_1,\ldots,y_n]$ with \emph{non-negative coefficients}, and $q_1,\ldots,q_m \in \mathbb{R}[y_1,\ldots,y_n]$ such that for all probability distributions $\by$,
$$t(f;G, \by,\phi) = \sum_{i=1}^m p_i(\by) q_i(\by)^2.$$

The definition of $G$-sos extends to $\A$.
The following lemma which is a key step in the proofs of Theorems~\ref{thm:Lovasz}~and~\ref{thm:Razborov} relates the squares of labeled quantum graphs to the squares of polynomials.

\begin{lem}
\label{lem:key}
Let $G$ be a graph. Then
\begin{itemize}
 \item[(i)] $f^2$ is $G$-sos for every $f \in \A$;
 \item[(ii)] $\llbracket f^2 \rrbracket_T \llbracket g^2 \rrbracket_T - \llbracket fg \rrbracket_T^2$ is $G$-sos for all finite $T \subseteq \mathbb{N}$ and $f,g \in \A$;
 \item[(iii)] if $f$ and $g$ are $G$-sos then $\alpha f + \beta g$ is $G$-sos for all $\alpha,\beta \in \mathbb{R}_+$;
 \item[(iv)] if $f$ and $g$ are $G$-sos then $fg$ is $G$-sos;
 \item[(v)] if $f$ is $G$-sos, then $\llbracket f \rrbracket_T$ is $G$-sos for all  finite $T \subseteq \mathbb{N}$.
\end{itemize}
\end{lem}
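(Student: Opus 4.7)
The plan is to unpack the definition of $G$-sos as a concrete polynomial identity in the indeterminates $\by=(y_v)_{v\in V(G)}$ and then verify (i)--(v) one at a time. Throughout I shall use two basic facts: that $f\mapsto t(f;G,\by,\phi)$ is an algebra homomorphism from $\mathbb{R}[\mathcal{F}_L]$ to $\mathbb{R}[\by]$ (the straightforward extension of (\ref{eq:product}) to non-uniform $\by$), together with the unlabeling identity
$$t(\llbracket h\rrbracket_T;G,\by,\phi)=\sum_{\psi:L\setminus T\to V(G)}\by(\psi)\,t(h;G,\by,\phi\cup\psi),\qquad h\in\mathbb{R}[\mathcal{F}_L],\ \phi:T\to V(G),$$
where $\by(\psi):=\prod_{v\in L\setminus T}\by(\psi(v))$, which simply records that unlabeled vertices get independently $\by$-distributed targets.

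Items (i), (iii), (iv), (v) are then essentially immediate. For (i), homomorphy gives $t(f^2;G,\by,\phi)=t(f;G,\by,\phi)^2$, which is $p\cdot q^2$ with $p\equiv 1$ and $q=t(f;G,\by,\phi)$. For (iii), concatenating the decompositions of $f$ and $g$ with the non-negative scalars $\alpha,\beta$ suffices. For (iv), after placing $f,g$ in a common $\mathbb{R}[\mathcal{F}_L]$, multiplicativity turns $t(fg;G,\by,\phi)$ into a product of two sum-of-squares expressions, which distributes to $\sum_{i,j}(p_i p'_j)(q_i q'_j)^2$, with $p_i p'_j$ still having non-negative coefficients. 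For (v), substituting the $G$-sos decomposition of $f$ at $\phi\cup\psi$ into the unlabeling identity yields $\sum_{\psi,i}\bigl(\by(\psi)\,p_i^\psi(\by)\bigr)\,q_i^\psi(\by)^2$, and multiplication by the monomial $\by(\psi)$ preserves non-negativity of coefficients.

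The substantive step is (ii). Choose $L\supseteq T$ with $f,g\in\mathbb{R}[\mathcal{F}_L]$, and for $\psi:L\setminus T\to V(G)$ set $F_\psi:=t(f;G,\by,\phi\cup\psi)$ and $G_\psi:=t(g;G,\by,\phi\cup\psi)$. The unlabeling identity gives
$$t(\llbracket f^2\rrbracket_T)=\sum_\psi\by(\psi)F_\psi^2,\quad t(\llbracket g^2\rrbracket_T)=\sum_\psi\by(\psi)G_\psi^2,\quad t(\llbracket fg\rrbracket_T)=\sum_\psi\by(\psi)F_\psi G_\psi,$$
and the Lagrange identity
$$\Bigl(\sum_\psi\by(\psi)F_\psi^2\Bigr)\Bigl(\sum_\psi\by(\psi)G_\psi^2\Bigr)-\Bigl(\sum_\psi\by(\psi)F_\psi G_\psi\Bigr)^{\!2}=\tfrac{1}{2}\sum_{\psi,\psi'}\by(\psi)\by(\psi')\bigl(F_\psi G_{\psi'}-F_{\psi'}G_\psi\bigr)^2$$
delivers the required $G$-sos form, since $\tfrac{1}{2}\by(\psi)\by(\psi')$ is a non-negative-coefficient monomial and each $F_\psi G_{\psi'}-F_{\psi'}G_\psi$ is a polynomial whose square appears explicitly. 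The main thing requiring care is the label-set bookkeeping — choosing a common $L$, checking that $\llbracket\cdot\rrbracket_T$ interacts with products of the $\llbracket f^2\rrbracket_T,\llbracket g^2\rrbracket_T,\llbracket fg\rrbracket_T$ in the stated form, and confirming that the Lagrange symmetrization is legitimate inside $\mathbb{R}[\by]$; once this is cleanly set up the crux of (ii) is the classical Cauchy-Schwarz/Lagrange manipulation, which I expect to be the only genuinely non-routine point in the proof.
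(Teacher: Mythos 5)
Your proposal is correct and follows essentially the same route as the paper's proof: parts (i), (iii), (iv), (v) via the homomorphism property and the unlabeling identity, and part (ii) via exactly the Lagrange/Cauchy--Schwarz symmetrization over pairs of extensions $\psi_1,\psi_2$ of $\phi$. No substantive differences to report.
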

\begin{proof}
To verify assertion (i), take $m=1$, $p_1=1$, and
$q_1=t(f;G,\by,\phi)$ in the definition of a $G$-sos element.
 %Assertion (i) holds as $t(\cdot;G, \by,\phi)$ is  a homomorphism and $t(f;G, \by,\phi)$ is a polynomial in $\by$.
 For (ii), without loss of generality we assume that $f,g \in \A_L$ for some finite $L \supseteq T$. Denoting $A:=t(\llbracket f^2 \rrbracket_T \llbracket g^2 \rrbracket_T - \llbracket fg \rrbracket_T^2;G, \by,\phi)$, we have
\begin{eqnarray*}
A &=& \left( \sum_{\psi} \left(\prod_{v \in L \backslash T} y_{\psi(v)}\right) t^2(f ; G,\by,\psi)  \right)  \left( \sum_{\psi} \left(\prod_{v \in  L \backslash T} y_{\psi(v)}\right) t^2(g ; G,\by,\psi)  \right)  \\
&& -\left( \sum_{\psi} \left( \prod_{v \in  L \backslash T} y_{\psi(v)}\right) t(f; G,\by, \psi)t(g; G,\by, \psi)\right)^2  \\
&=& \frac{1}{2}\sum_{\psi_1} \sum_{\psi_2} \left(\prod_{v \in  L \backslash T} y_{\psi_1(v)}\right)\left(\prod_{v \in  L \backslash T} y_{\psi_2(v)}\right)\\
&& \times \left( t(f; G,\by, \psi_1)t(g ; G,\by, \psi_2) -  t(f ; G,\by, \psi_2)t(g ; G,\by, \psi_1) \right)^2,
\end{eqnarray*}
where the summations over $\psi,\psi_1$ and $\psi_2$ are over all maps from $L$  to $V(G)$, which coincide with $\phi$ on $T$.
Assertions (iii) and (iv) are trivial. Finally, for (v) if $f \in \mathbb{R}[\mathcal{F}_L]$, then we have
$$
\nonumber t(\llbracket f \rrbracket_T;G, \by,\phi) = \sum_{\psi}\left(\prod_{v \in  L \backslash T} y_{\psi(v)}\right) t(f, \psi; G,\by),
$$
where the summation is over all maps $\psi$ from $L$  to $V(G)$ which coincide with $\phi$ on $T$.
\end{proof}

Consider a graph $H$. A set $W \subseteq V(H)$ is called \emph{homogenous} in $H$, if for every two distinct vertices $u,v \in W$, $N(u) \backslash W \neq N(v) \backslash W$, where $N(u)$ and $N(v)$ respectively denote the set of the neighbors of $u$ and $v$. We call a graph $H$ \emph{stringent}, if it does not contain any homogenous subsets $W$ with $1<|W| \le |V(H)|-1$, and furthermore does not allow any non-trivial automorphisms. Note that if $H$ is stringent, then in particular the identity map is the only map from $H$ to itself that preserves both adjacency and non-adjacency.

Stringent graphs serve as the foundation for all our constructions, and so we will need the following simple lemma.

\begin{figure}
\centering
\includegraphics[scale=0.75]{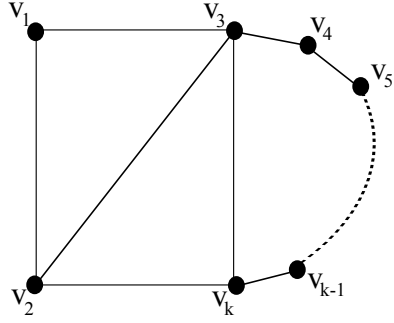} %for latex
\caption{A stringent graph on $k \geq 6$ vertices.} \label{f:asymmetric}
\end{figure}
\begin{lem}\label{lem:stringent}
For every $k \geq 6$, there exists a stringent graph on $k$ vertices.
\end{lem}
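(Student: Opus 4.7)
The plan is to exhibit for each $k \ge 6$ an explicit graph $H_k$ and verify directly that it has no non-trivial automorphism and no non-trivial homogeneous subset. A convenient candidate is $H_k$ with vertex set $\{1,2,\ldots,k\}$, edges $\{i,i+1\}$ for $1 \le i \le k-1$ (a Hamiltonian path), together with one extra chord $\{2,4\}$. This produces the degree sequence $\deg(1)=\deg(k)=1$, $\deg(2)=\deg(4)=3$, and $\deg(v)=2$ for every remaining vertex, so the ``front'' of the graph is a triangle on $\{2,3,4\}$ with a pendant leaf at $1$, and the ``tail'' $4-5-\cdots -k$ is an ordinary path ending in the second leaf.

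To show that $\Aut(H_k)$ is trivial, any automorphism $\sigma$ must preserve degrees. The two leaves $1$ and $k$ are distinguished by the degree of their unique neighbour: the neighbour of $1$ is the degree-$3$ vertex $2$, while the neighbour of $k$ is the degree-$2$ vertex $k-1$ (here we use $k \ge 6$ so that $k-1 \notin \{2,4\}$). Hence $\sigma$ fixes $1$ and $k$, and therefore also $2$ and $k-1$; then the other degree-$3$ vertex $4$ and the common neighbour $3$ of $2,4$ are forced, and the remaining vertices along the tail are pinned down inductively, each as the unique unvisited neighbour of the preceding vertex.

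For the homogeneous-set part I would argue by contradiction: assume $W$ is a non-trivial module of $H_k$ and split into cases according to which of the ``anchor'' vertices $1,2,4,k$ lie in $W$. Since $H_k$ is connected, $W$ always admits at least one external neighbour. If $1 \in W$, then $2 \notin W$ (else the external neighbourhood of $1$ would be empty and $W$ would be a connected component), so $2$ is an external neighbour, which forces every $u \in W$ to be adjacent to $2$; hence $W \subseteq N(2) = \{1,3,4\}$, and one verifies by inspection that no proper superset of $\{1\}$ inside $\{1,3,4\}$ is a module. A symmetric argument disposes of the case $k \in W$, and the same ``unique external neighbour'' trick rules out $2 \in W$ or $4 \in W$, each time reducing $W$ to a tiny finite list of candidates which can be checked by hand. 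In the remaining case $W \subseteq \{3,5,6,\ldots,k-1\}$, if $3 \in W$ then the external vertex $2$ is a neighbour of no other candidate, forcing $W=\{3\}$; otherwise the smallest index $i \in W \subseteq \{5,\ldots,k-1\}$ has $i-1$ as an external neighbour, and $i-1$ is adjacent to no other vertex of $\{i,i+1,\ldots,k-1\}$, so $W=\{i\}$.

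The main obstacle is keeping the module case analysis genuinely uniform in $k$: the boundary value $k=6$, where the tail $\{5,\ldots,k-1\}$ collapses to a single vertex, needs a quick separate sanity check, but the arguments above apply without modification. There is also a design temptation to add several chords in order to force asymmetry more visibly, which I would resist: each extra edge potentially creates new pairs of near-twins that must then be ruled out during the module analysis, and the single chord $\{2,4\}$ is already enough to destroy the mirror symmetry of the underlying path while keeping the homogeneous-set bookkeeping manageable.
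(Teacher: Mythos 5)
Your construction is correct and the verification goes through: I checked the homogeneous-set case analysis and the automorphism argument, including the boundary case $k=6$, and your explicit appeal to $k-1\notin\{2,4\}$ is exactly the right place to invoke $k\ge 6$ (for $k=5$ the mirror map $1\leftrightarrow 5$, $2\leftrightarrow 4$, $3\mapsto 3$ is a non-trivial automorphism of your graph, so the hypothesis is genuinely needed). The route differs from the paper's in the choice of graph and hence in how homogeneous sets are excluded. The paper takes a triangle $v_1v_2v_3$ with a path $v_3v_4\ldots v_k$ closed back by the edges $v_kv_2$ and $v_kv_3$; that graph is $2$-connected, so any homogeneous $W$ with $1<|W|\le |V(H)|-2$ has at least two external neighbours, each adjacent to all of $W$, which confines $W$ to the common neighbourhood of two vertices and leaves only $\{v_2,v_3\}$ to check (the size $|V(H)|-1$ case is excluded by the degree sequence). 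Your graph is deliberately not $2$-connected: the two leaves and the small maximum degree are precisely what make each branch of your case analysis collapse $W$ into a set of size at most three contained in some $N(v)$. Both arguments are short; the paper's buys one uniform structural step ($2$-connectivity) at the price of a slightly denser graph, while yours trades that for a longer but entirely mechanical enumeration anchored at the vertices $1,2,4,k$. One cosmetic remark: you correctly read ``homogenous'' as ``module,'' i.e.\ $N(u)\setminus W = N(v)\setminus W$ for all $u,v\in W$; the paper's displayed definition contains an evident typo ($\neq$ in place of $=$), and the module reading is the one consistent with both the paper's own proof of this lemma and the use of stringency in Claim~\ref{claim:claim2}.
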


\begin{proof} Let the graph $H$ on $k \geq 6$ vertices be obtained from a triangle with vertices $v_1v_2v_3$ by adding a path $v_3v_4\ldots v_k$ and finally by joining $v_k$ to $v_2$ and $v_3$. See Figure~\ref{f:asymmetric}.

Suppose first that $W \subseteq V(H)$ with $1<|W|\le |V(H)|-1$ is homogenous. Then $|W| \neq |V(H)|-1$, as no vertex of $H$ has $|V(H)|-1$ neighbors. The graph $H$ is $2$-connected, and therefore $W$ has at least two neighbors outside of $W$. It follows that $W$ must lie in the intersection of the neighborhoods of two vertices in $V(H) \backslash W$. This allows for a single possibility, namely $W=\{v_2,v_3\}$, which is clearly non-homogenous. Similarly, one can routinely verify that $H$ allows no non-trivial automorphisms, and thus is stringent.
\end{proof}

\begin{proof}[Proof of Theorem~\ref{thm:Lovasz}]

Let $H$ be a stringent graph on $k \ge 4$ vertices. Let $V(H) = [k]$. By (\ref{eq:tHpG}) for every homogenous polynomial $p \in \RR[x_1,\ldots,x_k]$, and every map $\phi: [k] \to V(H)$,
\begin{equation}
\label{eq:tHpHby}
t(\varphi_H(p); H,\by,\phi) = \left\{ \begin{array}{lcl} p(y_1,\ldots,y_k) & \qquad & \phi = \id \\
0 &&  \phi \neq \id \end{array} \right.
\end{equation}
Since the probability that a random map $\phi$ picked according to the probability measure $\by$ is equal to the identity map is $y_1y_2\ldots y_k$, it follows from (\ref{eq:tHpHby}) that
\begin{equation}
\label{eq:tHpNew}
t(\llbracket \varphi_H(p) \rrbracket; H,\by) =  y_1y_2\ldots y_k p(y_1,\ldots,y_k).
\end{equation}
Let $p$ be a homogeneous polynomial in $k$ variables such that the
even homogeneous polynomial $x^2_1x^2_2 \ldots x_k^2p(x_1^2,\ldots,x_k^2)$ satisfies the
assertion of Lemma~\ref{lem:L2}, e.g.
$$p(x_1^2,\ldots,x_k^2) = x_2^4 x_3^2 + x_3^4 x_4^2 + x_4^4 x_2^2  - 3 x_2^2 x_3^2 x_4^2.$$
By Lemma~\ref{lem:L1}, the quantum graph $x:=\llbracket \varphi_H(p) \rrbracket$ is positive. We claim that $x$ satisfies the assertion of  Theorem~\ref{thm:Lovasz}. Assume to the contrary that there exist labeled quantum graphs $g_1,\ldots,g_m$, such that for every graph $G$,
$$t(x;G) = \sum_{i=1}^m t(\llbracket g_i^2 \rrbracket; G).$$
Then by Remark~\ref{rem:rem1}, for every graph $G$ and every probability distribution $\by$ on the vertices of $G$,
$$t(x;G,\by) = \sum_{i=1}^m t(\llbracket g_i^2 \rrbracket; G,\by).$$
By Lemma~\ref{lem:key} there exist polynomials $p_1,\ldots,p_m \in \mathbb{R}[y_1,\ldots,y_k]$ with non-negative coefficients, and $q_1,\ldots,q_m \in \mathbb{R}[y_1,\ldots,y_k]$ so that
\begin{equation}
\label{eq:thmLovasz-e1}
t(x; H,\by) = \sum_{i=1}^m p_i(\by) q_i(\by)^2.
\end{equation}
Hence by (\ref{eq:tHpNew}),
\begin{equation}
\label{eq:thmLovasz-e2}
y_1y_2\ldots y_k p(y_1,\ldots,y_k) = \sum_{i=1}^m p_i(\by) q_i(\by)^2.
\end{equation}
Now consider arbitrary real numbers $x_1,\ldots,x_k$ not all of them zero. Set
\begin{equation}
\label{eq:choiceOfY}
\by := \left(\frac{x_1^2}{x_1^2+\ldots+x_k^2},\ldots,\frac{x_k^2}{x_1^2+\ldots+x_k^2}\right).
\end{equation}
As $p$ is homogenous, we have
$$
y_1y_2\ldots y_k p(\by) = (x_1^2+\ldots+x_k^2)^{-\deg(p)-k} x_1^2\ldots x_k^2 p(x_1^2,x_2^2,\ldots, x_k^2).
$$
Substituting this in (\ref{eq:thmLovasz-e2}) and multiplying both sides by a large enough power of $(x_1^2+\ldots+x_k^2)$ shows that
$$(x_1^2+x_2^2+\ldots+ x_k^2)^N \left( x^2_1x^2_2 \ldots x_k^2 p(x_1^2,x_2^2,\ldots, x_k^2) \right) \in \Sigma_k,$$ for some positive integer $N$.
But this contradicts the assumption that $(1,0,\ldots,0)$ is a bad point for $x^2_1x^2_2 \ldots x_k^2p(x_1^2,\ldots,x_k^2)$.
\end{proof}

\begin{proof}[Proof of Theorem~\ref{thm:Razborov}]
The proof parallels the proof of Theorem~\ref{thm:Lovasz}. Let $H,p$ and $x$ be as in the proof of Theorem~\ref{thm:Lovasz}. Then $x$ is positive. If $x$ is CS-positive, Lemma~\ref{lem:key} shows that $t(x;H,\by) = \sum_{i=1}^m p_i(\by) q_i(\by)^2$ for polynomials $p_1,\ldots,p_m \in \mathbb{R}[y_1,\ldots,y_k]$ with non-negative coefficients, and $q_1,\ldots,q_m \in \mathbb{R}[y_1,\ldots,y_k]$.  The rest of the proof proceeds as in that of Theorem~\ref{thm:Lovasz}.
\end{proof}

\section{Undecidability}
As we discussed in Section~\ref{sec:prel}  determining the positivity of a polynomial with rational coefficients is decidable.  However it follows from Matiyasevich's solution to Hilbert's tenth problem~\cite{MR0258744} that if one restricts to integer-valued variables, this problem becomes undecidable. More precisely given a multivariate polynomial with integer coefficients, the problem of determining whether it is non-negative for every assignment of integers (equivalently, positive integers) to its variables is undecidable. To prove the undecidability in Theorem~\ref{thm:undecide} we will need the following simple consequence of this fact.

\begin{lem}
\label{lem:Hilbert10}
The following problem is undecidable.
\begin{itemize}
 \item {\sc instance:} A positive integer $k \geq 6$, and a polynomial $p(x_1,\ldots,x_k)$ with integer coefficients.
 \item {\sc question:} Do there exist $x_1,\ldots,x_k \in \{1-1/n : n \in \mathbb{N}\}$ such that $p(x_1,\ldots,x_k)<0$?
\end{itemize}
\end{lem}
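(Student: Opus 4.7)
The plan is a direct reduction from the corollary of Matiyasevich's theorem recalled just before the lemma: it is undecidable, given $q(z_1,\ldots,z_m)\in\mathbb{Z}[z_1,\ldots,z_m]$, whether there exist positive integers $z_1,\ldots,z_m$ with $q(z_1,\ldots,z_m)<0$. The key idea is to parameterize the positive integers by the set $\{1-1/n:n\in\mathbb{N}\}$ via $z_i=1/(1-x_i)$, and then clear denominators so that the result remains a polynomial with integer coefficients.

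Concretely, setting $D:=\deg(q)$, I would define
$$p(x_1,\ldots,x_m):=\Bigl(\prod_{i=1}^m(1-x_i)^D\Bigr)\cdot q\!\left(\frac{1}{1-x_1},\ldots,\frac{1}{1-x_m}\right).$$
Each monomial $c\cdot z_1^{e_1}\cdots z_m^{e_m}$ of $q$, with $c\in\mathbb{Z}$ and $e_i\le D$, contributes $c\prod_i(1-x_i)^{D-e_i}$ to $p$, and since every exponent $D-e_i$ is non-negative, $p$ lies in $\mathbb{Z}[x_1,\ldots,x_m]$ and is effectively computable from $q$ by a Turing machine.

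For the verification, take $x_i=1-1/n_i$ with $n_i\in\mathbb{N}$; then $1-x_i=1/n_i>0$, so the prefactor $\prod_i(1-x_i)^D$ is strictly positive, and consequently
$$p(1-1/n_1,\ldots,1-1/n_m)<0 \quad\Longleftrightarrow\quad q(n_1,\ldots,n_m)<0.$$
Thus existence of $(x_i)\in\{1-1/n:n\in\mathbb{N}\}^m$ making $p$ negative is equivalent to the undecidable question for $q$. To meet the requirement $k\ge 6$, take $k:=\max(m,6)$ and regard $p$ as a polynomial in $k$ variables that is independent of $x_{m+1},\ldots,x_k$; this padding trivially preserves both the polynomial nature and the answer. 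No real obstacle arises in the plan — the only points needing checking are the strict positivity of the prefactor on the relevant domain (so that signs are preserved) and the effectiveness of the reduction, both of which are immediate from the construction.
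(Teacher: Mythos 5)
Your reduction is exactly the one the paper uses: substitute $z_i = 1/(1-x_i)$ into the Matiyasevich-undecidable problem and clear denominators with the prefactor $\prod_i(1-x_i)^{\deg q}$, whose positivity on the relevant domain preserves signs. The proposal is correct and matches the paper's proof, with your added checks (integrality of $p$, padding to $k\ge 6$) being routine details the paper leaves implicit.
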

\begin{proof}
Consider a polynomial $q(y_1,\ldots,y_k)$ with integer coefficients. Note that $q(y_1,\ldots,y_k) \ge 0$ for all $y_1,\ldots,y_k \in \mathbb{N}$ if and only if the polynomial with integer coefficients $$p(x_1,\ldots,x_k) := \left(\prod_{i=1}^k(1-x_i)^{\deg(q)}\right) q\left(\frac{1}{1-x_1},\ldots,\frac{1}{1-x_k}\right)$$ is non-negative for all $x_1,\ldots,x_k \in \{1-1/n : n \in \mathbb{N}\}$. Hence the problem is undecidable.
\end{proof}

It is not a priori clear how the non-negativity of formulas of the form $a_1 t(H_1; \cdot)+\ldots+a_k t(H_k;\cdot)$ is related to the non-negativity of a polynomial on \emph{integers}. Indeed if one considers a single graph $H$, then the set of all possible values of $t(H;\cdot)$ is everywhere dense in the interval $[0,1]$.
The key ingredient in the proof of Theorem~\ref{thm:undecide} is the observation that there are relations between $t(H;\cdot)$ for different graphs $H$ which are satisfied  for an infinite, but sparse set of possible values of the corresponding homomorphism densities. This can be already seen in the case of the relation between the edge homomorphism density and the triangle homomorphism density. Let $g(x) := 2x^2-x$. As it is mentioned in Section~\ref{sec:intro}, Goodman~\cite{MR0107610} proved that $t(K_3;G) \ge g(t(K_2;G))$, for every graph $G$. Bollob\'as improved Goodman's bound to the following.

\begin{thm}[Bollob\'as~\cite{MR0396327}]
\label{thm:bollobas}
For every graph $G$, and every positive integer $t$, if $$t(K_2; G) \in \left[1-\frac{1}{t},1-\frac{1}{t+1}\right],$$ then we have $t(K_3; G) \ge L(t(K_2; G))$, where
\begin{equation}\label{eq:defineL}
L(x) := \frac{3t^2-t-2}{t(t+1)}x - \frac{2(t-1)}{t+1}.
\end{equation}
\end{thm}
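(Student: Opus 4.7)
The plan is to reduce the claim to weighted complete multipartite graphs via a symmetrization argument, and then to verify the bound by direct polynomial computation on the resulting one-parameter family.

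By Remark~\ref{rem:rem1}, it suffices to prove the inequality for every pair $(G,\by)$ with $\by$ a probability measure on $V(G)$. Fix $d \in [1-1/t, 1-1/(t+1)]$ and consider a minimizer of $t(K_3; G, \by)$ subject to $t(K_2; G, \by) = d$; one exists by a compactness argument in the space of graphons. The crucial step is to show that such a minimizer can be taken to be a weighted complete multipartite graph, i.e.\ one in which non-adjacency is an equivalence relation. This is achieved by a perturbation argument: shifting mass between two non-adjacent vertices $u,v$ changes $t(K_2)$ linearly in the shift parameter by an amount proportional to the difference of the weighted degrees of $u$ and $v$, and changes $t(K_3)$ by a polynomial expression in the shift. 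By pairing such shifts (between $(u_1,v_1)$ and $(u_2,v_2)$) so as to preserve $t(K_2)$ exactly, one obtains a sufficiently rich family of admissible perturbations that, combined with the Lagrange stationarity conditions, forces any two non-adjacent vertices of the minimizer to have identical neighborhoods.

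Once restricted to a weighted complete multipartite graph with parts of relative sizes $a_1,\ldots,a_s$ summing to $1$, one has
$$t(K_2) = 1 - \sum_i a_i^2, \qquad t(K_3) = 1 - 3\sum_i a_i^2 + 2 \sum_i a_i^3.$$
Minimizing $\sum_i a_i^3$ subject to the two constraints $\sum_i a_i = 1$ and $\sum_i a_i^2 = 1 - d$, the Lagrange conditions $3 a_i^2 = \lambda + 2 \mu a_i$ force each positive $a_i$ to take one of at most two distinct values. A short case analysis shows that for $d \in [1 - 1/t,\, 1 - 1/(t+1)]$, the extremal configuration consists of $t$ parts of common weight $a$ together with a single remaining part of weight $1 - ta$, where $a \in [1/(t+1), 1/t]$. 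At the endpoints $a = 1/t$ and $a = 1/(t+1)$ this family recovers the balanced Tur\'an graphs $T(n,t)$ and $T(n,t+1)$, for which direct calculation confirms equality in the claimed bound.

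For intermediate values of $a$, substituting the one-parameter family into $t(K_3)(a) - L(t(K_2)(a))$ yields a polynomial in $a$ of small degree which vanishes at both endpoints; the desired inequality then reduces to verifying that this polynomial is non-negative on $[1/(t+1), 1/t]$, a routine calculation equivalent to showing that the parametric curve $a \mapsto (t(K_2)(a), t(K_3)(a))$ lies above its chord. The main obstacle is the reduction to complete multipartite structure in the first step: although the perturbation argument is conceptually natural, making it fully rigorous — carefully cancelling first-order variations of $t(K_2)$ across paired shifts, controlling higher-order terms, and ruling out ``fractional'' minimizers with $W(u,v) \in (0,1)$ — requires the graphon compactness framework and constitutes the substantive combinatorial content of the proof.
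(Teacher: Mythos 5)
The paper does not prove Theorem~\ref{thm:bollobas}; it imports it from Bollob\'as~\cite{MR0396327}, so your proposal has to be judged against the known proofs of that result. Judged that way, it has a genuine gap, and it sits exactly where you yourself flag the ``substantive combinatorial content'': the claim that a minimizer of $t(K_3;\cdot)$ subject to $t(K_2;\cdot)=d$ can be taken to be a weighted complete multipartite graph, with the extremal configuration consisting of $t$ equal parts plus one smaller part. That claim is not a routine Zykov-type symmetrization; it is essentially the Lov\'asz--Simonovits problem for triangles, which resisted all local-perturbation approaches from the 1970s until Razborov's flag-algebra proof~\cite{MR2433944} (the paper explicitly contrasts Bollob\'as's bound with Razborov's exact one for this reason). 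The paired mass shifts you describe only produce first-order stationarity relations between weighted degrees and weighted triangle-degrees of non-adjacent vertices; such relations do not force non-adjacency to be transitive, and they do not exclude minimizers with fractional values $W(u,v)\in(0,1)$ --- since both $t(K_2)$ and $t(K_3)$ are affine in each individual entry of $W$, every single-entry variation must be paired against the constraint, and the resulting conditions are satisfied by configurations that are far from complete multipartite. So ``making it fully rigorous'' is not a technical chore to be deferred; it is the entire difficulty, and as sketched the step fails.

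The route that actually works for the \emph{linear} bound avoids identifying constrained minimizers altogether. Bollob\'as proves directly that the normalized clique-count vector of any graph lies on or above the lower convex hull of the points realized by complete multipartite Tur\'an graphs, via an induction on the number of vertices (a vertex-deletion argument applied to the linear functional $t(K_3)-\lambda\, t(K_2)$ for the chord slope $\lambda$). Minimizing an unconstrained linear combination is a fundamentally easier variational problem than minimizing $t(K_3)$ at fixed $t(K_2)$, which is why the chord bound was available in 1976 while the exact curve was not. Your closing computation on the one-parameter family (and the endpoint check against $g(x)=2x^2-x$) is correct as far as it goes, but without replacing the reduction step the argument does not establish the theorem.
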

Note that for every positive integer $t$, on the interval $\left[1-\frac{1}{t},1-\frac{1}{t+1}\right]$, $L$ is the linear function  that coincides with $g$ on the endpoints. Razborov~\cite{MR2433944} has recently proven the exact lower bound for $t(K_3; G)$ in terms of $t(K_2; G)$, but Bollob\'as's result suffices for our purpose.  Let $L: [0,1) \to \mathbb{R}$ be the continuous piecewise linear function defined on $\left[1-\frac{1}{t},1-\frac{1}{t+1}\right]$ by (\ref{eq:defineL}) for every positive integer $t$. By Theorem~\ref{thm:bollobas}, for every graph $G$, $(t(K_2;G),t(K_3;G)) \in R$ where $R  \subset [0,1]^2$ is the region defined as $$R:=\{(x,y) \in [0,1]^2: y \ge L(x)\}.$$

The examples of complete graphs show that Goodman's bound is tight when $$t(K_2; \cdot) \in \left\{1-\frac{1}{n}: n \in \mathbb{N}\right\},$$ and on the other hand Theorem~\ref{thm:bollobas} shows that it is \emph{not} tight on the rest of the interval $[0,1)$. Hence  the algebraic expression
$t(K_3;G)-g(t(K_2;G))$ can be equal to $0$ if and only if  $t(K_2; \cdot) \in \left\{1-\frac{1}{n}: n \in \mathbb{N}\right\}$. This already reveals the connection to Lemma~\ref{lem:Hilbert10} and suggests a direction for proving Theorem~\ref{thm:undecide}.

\begin{lem}
\label{lem:calculus}
Let $p$ be a polynomial in variables $x_1,\ldots,x_k$. Let $M$ be the sum of the absolute values of the coefficients of $p$ multiplied by $100\deg(p)$. Define $q \in \RR[x_1,\ldots,x_k,y_1,\ldots,y_k]$ as
$$q := p \prod_{i=1}^k (1-x_i)^6  + M \left(\sum_{i=1}^k y_i - g(x_i)\right).$$
Then the following are equivalent
\begin{itemize}
 \item {\rm (a):} $q(x_1,\ldots,x_k,y_1,\ldots,y_k) < 0$  for some $x_1,\ldots,x_k,y_1,\ldots,y_k$ with $(x_i,y_i) \in R$ for every $1 \leq i \leq k$;
 \item {\rm (b):}  $p(x_1,\ldots,x_k)<0$ for some $x_1,\ldots,x_k \in \{1-1/n : n \in \mathbb{N}\}$.
\end{itemize}
\end{lem}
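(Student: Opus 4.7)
The plan is to handle the two implications separately; the direction $(b) \Rightarrow (a)$ is immediate. Given $p(x_1,\ldots,x_k) < 0$ with each $x_i \in \{1 - 1/n : n \in \mathbb{N}\}$, I would set $y_i := g(x_i)$. Because each such $x_i$ is a break-point of the piecewise-linear function $L$, we have $L(x_i) = g(x_i)$, so $(x_i, y_i) \in R$. The second sum in $q$ then vanishes and $q = p \prod_{i}(1 - x_i)^6 < 0$ because each $x_i < 1$.

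For $(a) \Rightarrow (b)$, suppose $q(x^*, y^*) < 0$ at some $(x_i^*, y_i^*) \in R$. Since $\partial q/\partial y_i = M > 0$, I may decrease each $y_i^*$ to $L(x_i^*)$ while preserving the strict inequality. Let $[a_i, b_i] = [1 - 1/t_i, 1 - 1/(t_i+1)]$ be the linear piece of $L$ containing $x_i^*$. Using the routine identity $L(x) - g(x) = 2(x-a)(b-x)$ on each such piece, the hypothesis becomes
\[
|p(x^*)| \prod_{j=1}^k (1 - x_j^*)^6 > 2M \sum_{i=1}^k (x_i^* - a_i)(b_i - x_i^*),
\]
which in particular forces $p(x^*) < 0$ and $x_i^* < 1$ for every $i$.

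I would then round each coordinate to its nearer endpoint, setting $\bar x_i \in \{a_i, b_i\}$ and $\Delta_i := |x_i^* - \bar x_i| \leq (b_i - a_i)/2$. The elementary estimate $(x_i^* - a_i)(b_i - x_i^*) \geq \Delta_i (b_i - a_i)/2$, combined with $(1 - x_j^*)^6 \leq t_j^{-6}$ and $b_i - a_i = 1/(t_i(t_i+1)) \geq 1/(2 t_i^2)$, yields term by term
\[
\frac{M \Delta_i (b_i - a_i)}{\prod_j (1 - x_j^*)^6} \;\geq\; \frac{M}{2}\Delta_i \cdot t_i^4 \prod_{j \neq i} t_j^6 \;\geq\; \frac{M}{2}\Delta_i \;=\; 50\,C\,\deg(p)\,\Delta_i,
\]
where $C$ denotes the sum of the absolute values of the coefficients of $p$. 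Summing, the displayed inequality above gives $|p(x^*)| > 50\,C\,\deg(p) \sum_i \Delta_i$. On the other hand $|\partial p/\partial x_i| \leq C\,\deg(p)$ on $[0,1]^k$, so a telescoping sum yields $|p(\bar x) - p(x^*)| \leq C\,\deg(p) \sum_i \Delta_i \leq |p(x^*)|/50$. Since $p(x^*) < 0$, this implies $p(\bar x) \leq (1 - 1/50)\,p(x^*) < 0$, which establishes $(b)$ because each $\bar x_i \in \{1 - 1/n : n \in \mathbb{N}\}$.

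The main delicate point is the term-by-term estimate above. For coordinates $x_i^*$ near $1$ (large $t_i$), the gap $b_i - a_i \sim 1/t_i^2$ is tiny and the penalty $M \Delta_i (b_i - a_i)$ alone would be too weak to beat the gradient-type error $C \deg(p) \Delta_i$ from rounding. What saves the argument is that $\prod_j (1 - x_j^*)^6$ is simultaneously extremely small (bounded by $\prod t_j^{-6}$) and sits in the denominator after we rearrange the hypothesis; the exponent $6$ in $\prod(1-x_i)^6$ and the factor $100\deg(p)$ in $M$ are precisely calibrated so that the penalty dominates both the size of $p$ on $[0,1]^k$ and the rounding error.
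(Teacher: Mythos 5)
Your proof is correct, and in the forward direction $(a)\Rightarrow(b)$ it takes a genuinely different route from the paper's. Both arguments start identically: push each $y_i$ down to $L(x_i^*)$ and reduce to the function $\tilde q$. From there the paper argues variationally: it fixes the intervals $[1-1/t_i,\,1-1/(t_i+1)]$ containing the $x_i^*$, takes a minimizer of $\tilde q$ over their product, and uses the first-order condition $\partial\tilde q/\partial x_i=0$ to show that an interior coordinate would have to lie within $O(t_i^{-5})$ of the vertex of the parabola $g-L$ on that piece, where $L-g\ge 1/(10t_i^4)$; the penalty term then forces $\tilde q\ge 0$, a contradiction, so the minimizer sits at breakpoints. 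You instead round $x^*$ directly to the nearest breakpoints and control $|p(\bar x)-p(x^*)|$ by a Lipschitz bound. Your key identity $L(x)-g(x)=2(x-a)(b-x)$ is correct ($g-L$ is the quadratic with leading coefficient $2$ vanishing at $a$ and $b$; at the midpoint it recovers the paper's value $1/(2t^2(t+1)^2)$), and the chain of estimates checks out: $(x_i^*-a_i)(b_i-x_i^*)\ge \Delta_i(b_i-a_i)/2$, $(1-x_j^*)^6\le t_j^{-6}$, and $b_i-a_i\ge 1/(2t_i^2)$ give a normalized penalty of at least $\tfrac{M}{2}\Delta_i=50\,C\deg(p)\,\Delta_i$ per coordinate, which beats the gradient bound $C\deg(p)$ with a factor $50$ to spare. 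Your version is more elementary (no derivative of $L$, no localization near the parabola's vertex), is quantitative ($p(\bar x)\le \tfrac{49}{50}p(x^*)$), and in fact only needs exponent $2$ rather than $6$ in $\prod(1-x_i)^6$ for this direction, whereas the paper's derivative computation genuinely uses the larger exponent; the paper's approach, in exchange, identifies where the minimum of $\tilde q$ over each cell is attained. (The degenerate case $\deg p=0$, where $M=0$, is trivial in both treatments and not worth worrying about.)
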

\begin{proof}
If (b) holds, then for each $x_i$ we have $(x_i,g(x_i)) \in R$ and setting $y_i:=g(x_i)$ gives\newline $q(x_1,\ldots,x_k,y_1,\ldots,y_k) < 0$. Therefore (b) implies (a).

Suppose now that (a) holds. Decreasing $y_i$ decreases the value of $q$ and thus  we assume without loss of generality that $y_i = L(x_i)$. Let $$\tilde{q}(x_1,x_2,\ldots,x_k)= q(x_1,\ldots,x_k,y_1,\ldots,y_k) = p(x_1,\ldots,x_k) \prod_{i=1}^k (1-x_i)^6  + M \left(\sum_{i=1}^k L(x_i) - g(x_i)\right)<0.$$ For every $1\leq i \leq k$, let  $t_i$ be a positive integer such that $x_i \in \left[1-\frac{1}{t_i},1-\frac{1}{t_i+1}\right]$. Fixing $t_1,\ldots,t_k$ we assume that $x_1,\ldots,x_k$ are chosen in the corresponding intervals to minimize $\tilde{q}$. We claim that in this case $x_i \in  \{1-1/n : n \in \mathbb{N}\}$ for every $1\leq i \leq k$.  Suppose for a contradiction that $x_i \in \left(1-\frac{1}{t_i},1-\frac{1}{t_i+1}\right)$, for some $i$. By the choice of $x_i$ we have $\frac{\partial{\tilde{q}}}{\partial{x_i}}=0$. Hence since
$$\left| \frac{\partial}{\partial{x_i}}\left(p(x_1,\ldots,x_k)  \prod_{i=1}^k (1-x_i)^6\right)\right| \leq 7 \frac{M}{100}(1-x_i)^5 \leq \frac{M}{12t_i^5},$$
we must have
\begin{equation}\label{eq:calc1}
\frac{1}{12t_i^5} \geq |L'(x_i)-g'(x_i)| = \left| \frac{3t_i^2-t_i-2}{t_i(t_i+1)} -4x_i + 1 \right|=4 \left| \frac{t_i^2-1/2}{t_i(t_i+1)} -x_i\right|.
\end{equation}
Let $z = \frac{t_i^2-1/2}{t_i(t_i+1)}$. We can rewrite (\ref{eq:calc1}) as
$|z - x_i | \leq \frac{1}{48t_i^5}$.
Note that $L'(z) = g'(z)$,  $L'(x)-g'(x)$ is monotone on the interval between $x_i$ and $z$, and
$L(z)-g(z) = 1/(2t_i^2(t_i+1)^2)$.
It follows that
\begin{align*}
L(x_i)-g(x_i) &\geq   (L(z)-g(z)) -  |L'(x_i)-g'(x_i)||z - x_i | \\&\geq \frac{1}{2t_i^2(t_i+1)^2} - \frac{1}{48t_i^{10}} \geq \frac{1}{8t_i^4}-\frac{1}{48t_i^4} \geq \frac{1}{10t_i^4}.
\end{align*}
Finally we have,
$$\tilde{q}(x_1,x_2,\ldots,x_k) \geq -\frac{M}{100}(1-x_i)^6 + M(L(x_i)-g(x_i)) \geq M \left( \frac{1}{10t_i^4} -\frac{1}{100 t_i^6} \right) \geq 0,$$
which is a contradiction. Therefore the claim that $x_i \in  \{1-1/n : n \in \mathbb{N}\}$ for every $1\leq i \leq k$ holds. In this case we have $$0 > \tilde{q}(x_1,x_2,\ldots,x_k) = p(x_1,x_2,\ldots,x_k) \prod_{i=1}^k (1-x_i)^6,$$
which shows that (b) holds.
\end{proof}

Define the map $$\tau:\RR[x_1,\ldots,x_k,y_1,\ldots,y_k] \to \RR[v_1,\ldots,v_k,e_1,\ldots,e_k,t_1,\ldots,t_k]$$  in the following way. For every polynomial $q$,  let $\tau(q)$ be obtained from $q$ by substituting for every $i$, $e_i/v_i^2$ and $t_i/v_i^3$ instead of $x_i$ and $y_i$, respectively, and multiplying the resulting rational function by $\prod_{i=1}^k v_i^{3 \deg(q)}$ so that it becomes a polynomial.

Given a graph $H$ with $V(H)=[k]$, we define the labeled quantum graphs $V_i,E_i,T_i \in \RR[{\mathcal F}_{[k]}]$ in the following way.  For every positive integer $m$ and every $j \in [k]$, let $H_{j,m} \in {\mathcal F}_{[k]}$ be the graph on $k+m$ vertices obtained from $H$ by adding a clique of size $m$ and connecting each one of the vertices of this clique to the neighbors of the vertex $j$. Then $V_j := \sum \ind(H_{j,1} \cup F)$, $E_j := \sum_F \ind(H_{j,2} \cup F)$, and $T_j := \sum_F \ind(H_{j,3} \cup F)$, where all these sums are over different ways of joining the unlabeled vertices to the vertex $j$.

Let  $\psi_H:\RR[v_1,\ldots,v_k,e_1,\ldots,e_k,t_1,\ldots,t_k] \rightarrow \RR[\mathcal{F}_{[k]}]$ be the unique algebra homomorphism that satisfies $\psi_H(v_i) = V_i$, $\psi_H(e_i) = E_i$, and $\psi_H(t_i) = T_i$ for every $i \in [k]$.

Consider a graph $G$, and let $S$ denote the set of all maps $h:V(H) \to V(G)$ that preserve both adjacency and non-adjacency. Consider a map $\phi:V(H) \rightarrow V(G)$. If $\phi \not\in S$, then $t(\ind(H);G, \phi)=0$ which in particular shows that $t(V_i;G, \phi)=t(E_i;G, \phi)=t(T_i;G, \phi)=0$ for every $ i \in [k]$. Hence in this case  $t(\psi_H(\tau(q));G, \phi)=0$, for every polynomial $q \in \RR[x_1,\ldots,x_k,y_1,\ldots,y_k]$.

Next consider a map $\phi \in S$.  For every $j \in [k]$, let $U_j(\phi)$ be the subgraph of $G$ induced on the set of vertices $v$ for which  the map obtained from $\phi$ by replacing $\phi(j)$ by $v$ belongs to $S$. Note that for every $j \in [k]$,
$$
\mbox{$t(V_j;G, \phi) = \frac{|U_j|}{|V(G)|}$, \qquad $t(E_j;G, \phi) = t(K_2;U_j) \frac{|U_j|^2}{|V(G)|^2}$,\qquad  $t(T_j;G, \phi) = t(K_3;U_j) \frac{|U_j|^3}{|V(G)|^3}$}.$$
This shows that
$$ \mbox{$\frac{t(E_j;G, \phi)}{t(V_j;G, \phi)^2} = t(K_2;U_j)$ \qquad and \qquad $\frac{t(T_j;G, \phi)}{t(V_j;G, \phi)^3} = t(K_3;U_j)$}. $$
Recalling the definitions of $\tau$ and $\psi_H$, we conclude that
\begin{equation}
\label{eq:psiH}
t(\psi_H(\tau(q));G, \phi) =  q(t(K_2;U_1),\ldots,t(K_2;U_k),t(K_3;U_1),\ldots,t(K_3;U_k)) \prod_{j=1}^k\left(\frac{|U_j|}{|V(G)|}\right)^{3 \deg(q)},
\end{equation}
for every polynomial $q \in \RR[x_1,\ldots,x_k,y_1,\ldots,y_k]$.
\begin{claim}
\label{claim:claim1}
Let $q \in \RR[x_1,\ldots,x_k,y_1,\ldots,y_k]$ be such that $q(x_1,\ldots,x_k,y_1,\ldots,y_k) \geq 0$ for all $x_1,\ldots,x_k,y_1,\ldots,y_k$ with $(x_i,y_i) \in R$ for every $1 \leq i \leq k$. Then $\psi_H(\tau(q))$ is a positive labeled quantum graph for every graph $H$ with $V(H)=[k]$.
\end{claim}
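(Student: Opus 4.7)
The plan is to invoke identity~(\ref{eq:psiH}) and then apply Bollobás's theorem (Theorem~\ref{thm:bollobas}) pointwise. The case $\deg q = 0$ is trivial: the hypothesis forces $q$ to be a nonnegative constant, and $\psi_H(\tau(q))$ is a nonnegative multiple of the multiplicative identity of $\A_{[k]}$. So assume $\deg q \geq 1$, fix a graph $G$ together with a map $\phi:[k] \to V(G)$, and aim to show $t(\psi_H(\tau(q)); G, \phi) \geq 0$. The proof splits into two cases according to whether $\phi \in S$.

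If $\phi \notin S$, the discussion preceding~(\ref{eq:psiH}) already gives $t(V_j; G, \phi) = t(E_j; G, \phi) = t(T_j; G, \phi) = 0$ for every $j$. A brief inspection of the substitution defining $\tau$ shows that when $\deg q \geq 1$, every monomial of $\tau(q)$ carries a strictly positive power of at least one of $v_j, e_j, t_j$ for each index $j$, and hence $t(\psi_H(\tau(q)); G, \phi) = 0$.

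If instead $\phi \in S$, equation~(\ref{eq:psiH}) yields
$$t(\psi_H(\tau(q)); G, \phi) = q\bigl(t(K_2; U_1), \ldots, t(K_2; U_k), t(K_3; U_1), \ldots, t(K_3; U_k)\bigr) \prod_{j=1}^k \left(\frac{|U_j|}{|V(G)|}\right)^{3\deg q}.$$
The product factor is manifestly nonnegative and vanishes whenever some $U_j$ is empty, so one may assume each $U_j$ is a nonempty graph. Applying Theorem~\ref{thm:bollobas} to each $U_j$ separately gives $(t(K_2; U_j), t(K_3; U_j)) \in R$, and the hypothesis on $q$ then forces the polynomial evaluation to be nonnegative; multiplying by the nonnegative product factor completes the proof.

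No real obstacle arises here: all the creative content has already been absorbed into the construction of $\tau$ and $\psi_H$ and the verification of~(\ref{eq:psiH}). The substantive point being recorded is that $\psi_H \circ \tau$ transports the region $R^k$—which is precisely the region cut out by Bollobás's inequality—into the positive cone of $\A_{[k]}$; this is exactly what the subsequent undecidability reduction will need, since the sparsity of the locus where the Bollobás bound is tight ($t(K_2;\cdot) \in \{1-1/n : n \in \mathbb{N}\}$) is what will eventually encode integer-valued variables into density inequalities.
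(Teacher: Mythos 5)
Your proof is correct and follows the same route as the paper's: apply Theorem~\ref{thm:bollobas} to each $U_j$ to get $(t(K_2;U_j),t(K_3;U_j))\in R$ and conclude from~(\ref{eq:psiH}), with the $\phi\notin S$ case handled by the vanishing observed just before that equation. You are somewhat more careful than the paper about the degenerate cases ($\deg q=0$, empty $U_j$), but the substance is identical.
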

\begin{proof}
Theorem~\ref{thm:bollobas} implies that $\left(t(K_2;U_j), t(K_3;U_j)\right) \in R$, for every $j \in [k]$. Now the claim follows from (\ref{eq:psiH}).
\end{proof}

\begin{claim}
\label{claim:claim2}
Let $q \in \RR[x_1,\ldots,x_k,y_1,\ldots,y_k]$ be such that $q(x_1,\ldots,x_k,y_1,\ldots,y_k) < 0$ for some $x_i \in \{1-1/n : n \in \mathbb{N}\}$ and $y_i = g(x_i)$ for $1\leq i \leq k$. Let $H$ be a stringent graph with $V(H)=[k]$. Then there exists a graph $G$ such that $$t(\llbracket \psi_H(\tau(q))\rrbracket; G) <0.$$
\end{claim}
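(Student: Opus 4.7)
The plan is to take $G$ to be a blow-up of $H$. For each $j\in[k]$, let $n_j$ be the positive integer with $x_j=1-1/n_j$ (which exists by hypothesis), and let $G$ be obtained from $H$ by replacing each vertex $j$ with a clique $B_j$ on $n_j$ vertices, placing all edges between $B_{j_1}$ and $B_{j_2}$ when $j_1\sim_H j_2$, and no edges between them when $j_1\not\sim_H j_2$. Let $S$ denote the set of maps $\phi:V(H)\to V(G)$ preserving both adjacency and non-adjacency, as in the discussion preceding the claim.

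The key structural step is to show that stringency of $H$ forces $S$ to consist of exactly the maps $\phi$ with $\phi(j)\in B_j$ for every $j\in[k]$, and hence that $U_j(\phi)=B_j$ for every such $\phi$. One direction is immediate from the construction. For the converse, given $\phi\in S$, I compose with the projection $\pi:V(G)\to V(H)$ sending each vertex of $B_l$ to $l$, obtaining $\psi=\pi\circ\phi:V(H)\to V(H)$. A short direct computation shows that $\psi$ preserves adjacency in $H$. The subtlety is that $\psi$ might a priori collapse two distinct vertices $j_1\neq j_2$ of $V(H)$, either because $\phi(j_1)=\phi(j_2)$ or because $\phi(j_1),\phi(j_2)$ are distinct vertices of the same clique $B_l$. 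In each case, testing adjacency and non-adjacency of $\phi(j_i)$ against $\phi(l)$ for the remaining $l\in[k]$ forces $j_1$ and $j_2$ to have identical open (respectively closed) neighborhoods in $H$, producing a two-element homogeneous subset of $V(H)$ and contradicting stringency. Hence $\psi$ is a bijection; adjacency preservation then automatically promotes to non-adjacency preservation, so $\psi$ is an automorphism, and the no-non-trivial-automorphism clause of stringency forces $\psi$ to be the identity.

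Once this characterisation of $S$ is in hand, each $U_j$ is a clique on $n_j$ vertices, so $t(K_2;U_j)=1-1/n_j=x_j$ and $t(K_3;U_j)=(1-1/n_j)(1-2/n_j)=g(x_j)=y_j$. Substituting into (\ref{eq:psiH}) gives, for every $\phi\in S$,
$$t(\psi_H(\tau(q));G,\phi)=q(x_1,\ldots,x_k,y_1,\ldots,y_k)\prod_{j=1}^{k}\left(\frac{n_j}{|V(G)|}\right)^{3\deg(q)},$$
while $t(\psi_H(\tau(q));G,\phi)=0$ for $\phi\notin S$. Since $t(\llbracket\psi_H(\tau(q))\rrbracket;G)=|V(G)|^{-k}\sum_\phi t(\psi_H(\tau(q));G,\phi)$ and $|S|=\prod_j n_j>0$, the left-hand side is a strictly positive multiple of $q(x_1,\ldots,x_k,y_1,\ldots,y_k)$, which is negative by assumption.

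The main obstacle is the structural step: the characterisation of $S$ relies on \emph{both} halves of the stringency hypothesis. The absence of non-trivial automorphisms handles the case when $\psi$ is already bijective, but one must first use the absence of two-element modules to rule out the collapsing behaviour that the clique structure of the blobs makes possible. Everything after that is a routine substitution into (\ref{eq:psiH}), leveraging the fact that $g$ was tailored precisely so that $g(1-1/n)$ equals the triangle density of $K_n$.
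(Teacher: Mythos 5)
Your proof is correct and follows essentially the same route as the paper's: the same blow-up construction of $G$, the same use of stringency (first module-freeness to show every $\phi\in S$ sends each vertex $j$ into its own blob, then automorphism-freeness to pin down the induced permutation as the identity), and the same substitution into (\ref{eq:psiH}). The only cosmetic difference is that the paper observes directly that each preimage set $\{i:\phi(i)\in W_j\}$ is homogeneous in $H$ and hence a singleton, whereas you route the same fact through the projection $\pi$ and a case analysis on collapsed pairs.
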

\begin{proof}
Let $n_1,\ldots,n_k \in \mathbb{N}$ be so that $q$ becomes negative by setting $x_i:=1-1/n_i$ and $y_i:=2x_i^2-x_i$ for all $ i \in [k]$.  Define $G$ to be the graph obtained from $H$ by replacing the vertex $j$ of $H$ by a clique of size $n_j$ for every $j \in [k]$. Let $W_j$ be the set of the vertices of the clique that replaces the vertex $j$ of $H$ in $G$.

Let $S$ denote the set of all maps $h:V(H) \to V(G)$ that preserve both adjacency and non-adjacency. Consider a map  $\phi \in S$. It follows from the structure of $G$ that for every $j$, $\{i: \phi(i) \in W_j\}$ is a homogeneous set in $H$, and since $H$ is stringent it is of size at most $1$. (Trivially, it cannot be all of $V(H)$). Hence for every $j$, $\{i: \phi(i) \in W_j\}$ is of size exactly $1$. Since $H$ is stringent, the identity map is the only isomorphism from $H$ to itself. It follows that  $\phi(j) \in W_j$, for every $j \in [k]$. Then  for every $j \in [k]$, $U_j(\phi)$ is the restriction of $G$ to $W_j$ which by definition of $G$ is a clique of size $n_j$. Thus
$$\mbox{$t(K_2;U_j)=1-1/n_j$ \qquad and \qquad $t(K_3;U_j)=g(1-1/n_j)$},$$
which by (\ref{eq:psiH}) shows that
$$t(\psi_H(\tau(q));G, \phi) =  q(x_1,\ldots,x_k,y_1,\ldots,y_k) \prod_{j=1}^k\left(\frac{|n_j|}{|V(G)|}\right)^{3 \deg(q)}<0.$$
Moreover if $\phi \not\in S$, then $t(\psi_H(\tau(q));G, \phi)=0$. We conclude that $t(\llbracket \psi_H(\tau(q))\rrbracket; G)<0$.
\end{proof}

\begin{proof} [Proof of Theorem~\ref{thm:undecide}]
Consider an instance of the undecidable problem stated in Lemma~\ref{lem:Hilbert10}, namely a polynomial $p$ in variables $x_1,\ldots,x_k$ with integer coefficient. Construct the polynomial $q$ in variables $x_1,\ldots,x_k,y_1,\ldots,y_k$ as in Lemma~\ref{lem:calculus}. Then Lemma~\ref{lem:calculus} shows that $p(x_1,\ldots,x_k)<0$ for some $x_1,\ldots,x_k \in \{1-1/n : n \in \mathbb{N}\}$ if and only if  $q(x_1,\ldots,x_k,y_1,\ldots,y_k) < 0$  for some $x_1,\ldots,x_k,y_1,\ldots,y_k$ with $(x_i,y_i) \in R$ for every $1 \leq i \leq k$. By Claims~\ref{claim:claim1}~and~\ref{claim:claim2} determining the latter is equivalent to determining the validity of $t(\llbracket \psi_H(\tau(q))\rrbracket; \cdot) \ge 0$ where $H$ is a stringent graph on $k$ vertices. Such graphs exist and can be explicitly constructed by Lemma~\ref{lem:stringent}.
\end{proof}

\section{Proof of Theorem~\ref{thm:Artin} \label{sec:Artin}}
Before giving the proofs of Lemma~\ref{lem:preArtin} and Theorem~\ref{thm:Artin} we need to recall some facts about graphons.
Note that if  $A_G$ is the adjacency matrix of a graph $G$, then for every graph $H$
\begin{equation}
\label{eq:t_in_matrix}
t(H;G) = \Ex \prod_{uv \in E(H)} A(x_u,x_v),
\end{equation}
where  $\{x_u  \: : \: u \in V(H)\}$ are independent uniform random variables taking values in $\{1,2,\ldots,|V(G)|\}$.
Let $\mathcal{W}_0$ denote the set of bounded symmetric measurable functions of the form $w:[0,1]^2 \rightarrow [0,1]$.
The elements of $\mathcal{W}_0$ are called \emph{graphons}. For every graph $H$, and every graphon $w \in \mathcal{W}_0$, define by analogy with~(\ref{eq:t_in_matrix}),
\begin{equation}
\label{eq:t_for_w}
t(H; w) := \int \prod_{uv \in E(H)} w(x_u,x_v) \prod_{v \in V(H)} dx_v.
\end{equation}
This definition can be extended linearly to define $t(x; w)$, for every quantum graph $x$. For every graph $G$, we define a graphon $w_G \in \mathcal{W}_0$ as follows: Without loss of generality assume that $V(G)=[n]$. Then $w_G(x, y) := A_G(\lceil x n \rceil; \lceil y n \rceil)$ if $x, y \in (0; 1]$, and if $x = 0$ or $y = 0$, then $w_G(x,y):=0$. By
(\ref{eq:t_in_matrix}) and (\ref{eq:t_for_w}), for every quantum graph $x$ and graph $G$, we have
$t(x;G) = t(x;w_G)$.

A graph sequence $\{G_i\}_{i \in \mathbb{N}}$  is called convergent, if for every graph $H$, the limit $\lim_{i \rightarrow \infty} t(H;G_i)$ exists.
It is shown in~\cite{MR2274085} that for every convergent graph sequence $\{G_i\}_{i \in \mathbb{N}}$, there exists a graphon $w$ such that $\lim_{i \rightarrow \infty} t(H;G_i) = t(H;w)$, for every graph $H$.  On the other hand for every graphon $w$, it is easy to construct a convergent graph sequence $\{G_i\}_{i \in \mathbb{N}}$ such that $t(H;w)=\lim_{i \rightarrow \infty} t(H;G_i)$, for every graph $H$.

\begin{proof}[Proof of Lemma~\ref{lem:preArtin}]
If $x$ is not positive, then there exists a graphon $w$ such that $t(x; w) <0$. Since $g \neq 0$,  by (\ref{eq:whitney}) there exists a graphon $w'$ satisfying $t(g;w') \neq 0$. Now note that by (\ref{eq:t_for_w}), $t(g; \alpha w' + (1-\alpha) w)$  is a polynomial in $\alpha$. This polynomial is not identically $0$ as it is not equal to zero on $\alpha=1$. Hence there are arbitrarily small $\alpha>0$ for which  $t(g; \alpha w' + (1-\alpha) w) \neq 0$. By taking a sufficiently small such $\alpha$, we obtain a graphon $w'':= \alpha w' + (1-\alpha) w$ that satisfies both
$$\mbox{$t(g;w'')>0$ \qquad and \qquad $t(x;w'')<0$}.$$
This in particular implies that $t(gx; w'') <0$, contradicting $gx=h$ and the assumption that $h$ is expressible as a sum of squares.
\end{proof}

\begin{proof}[Proof of Theorem~\ref{thm:Artin}]
Suppose to the contrary that for every positive  $x \in \A$, there exist $g, h \in \A$, $g \neq 0$, each expressible as sums of squares, such that $gx=h$. We will show  this would imply that given a quantum graph $f$ with rational coefficients, the problem of determining the validity of $f \ge 0$ is decidable, contradicting Theorem~\ref{thm:undecide}.

Given a collection of partially labeled graphs $\G = (G_1,\ldots,G_k)$, an integer $m$ and a matrix $A=(a_{ij} \: : \: i \in [m], j \in [k] )$ define $$z(\G,m,A):=\left\llbracket \sum_{i=1}^{m} \left( \sum_{j=1}^k a_{ij}G_j \right)^2\right\rrbracket.$$
This quantum graph can be expressed as a linear combination of graphs of the form $\llbracket G_{j_1} \cdot G_{j_2}\rrbracket$, where $j_1,j_2 \in [k]$, with coefficients polynomial in the entries of $A$. Note that connected non-isomorphic graphs  are algebraically independent as elements of $\A_{\emptyset}$ and every graph as an element of $\A_{\emptyset}$ is equal to the product of its connected components. It follows that for a fixed quantum graph $f$ with rational coefficients, a fixed collection of partially labeled graphs $\G$ and an integer $m$,  the system
\begin{equation}\label{eq:sosArtin}
z(\G,m,A)f=z(\G,m,B) \qquad \mathrm{and} \qquad z(\G,m,A)\neq 0
\end{equation}
can be expressed as a (computable) system of polynomial equations and inequalities with rational coefficients on the entries of $A$ and $B$. Therefore, it is possible to decide whether there exist matrices $A$ and $B$ with real entries solving this system.

%Let $G_1,\ldots,G_k$ and $H_1,\ldots,H_m$ be partially labeled graphs. Since  connected non-isomorphic graphs  are algebraically independent as elements of $\A_{\emptyset}$ (see~\cite{MR1503085}), it is possible to decide whether there exists reals $a_1,\ldots,a_k$, and $b_1,\ldots,b_m$ for which $a_1 G_1+\ldots+a_k G_k \neq 0 $ in $\A_{\emptyset}$ and
%\begin{equation}
%\label{eq:sosArtin}
%\left\llbracket (a_1 G_1+\ldots+a_k G_k)^2 \right\rrbracket f = \left\llbracket (b_1 H_1+\ldots+b_m H_m)^2 \right\rrbracket.
%\end{equation}

Hence in order to decide the validity of $f \ge 0$, one enumerates  finite graphs $G$ and checks the validity of $t(f;G) \ge 0$ on each graph. In parallel, one enumerates all pairs $(\G,m)$, where $\G$ is a finite sequence of finite partially labeled graphs and $m$ is an integer, and for each such pair checks whether there exists a solution to (\ref{eq:sosArtin}).
\end{proof}

\section*{Acknowledgements}
The authors wish to thank Alexander Razborov for many enlightening discussions. They also wish to thank  Swastik Kopparty, L{\'a}szl{\'o} Lov{\'a}sz and anonymous referee  for their valuable comments.
%

%%% ----------------------------------------------------------------------
\bibliographystyle{alpha}
\bibliography{positive}
%%% ----------------------------------------------------------------------
\appendix
\section{Original formulation of Razborov's Cauchy-Schwarz calculus. \label{appendix}}

In the formulation given in~\cite{MR2371204}, the Cauchy-Schwarz calculus contains an additional axiom and an additional inference rule.
The axiom can be stated here as:
\begin{itemize}
\item $\ind(H) \geq 0$ for every partially labeled graph $H$.
\end{itemize}
This axiom can be derived in the Cauchy-Schwarz calculus  presented in  Definition~\ref{def:CS} as follows.
Let $H'$ be a graph obtained from $H$ by assigning new labels to the previously unlabeled vertices of $H$, so that all vertices of $H'$ are labeled. Then we have $(\ind(H'))^2=\ind(H')$. It follows that $\ind(H) = \llbracket (\ind(H'))^2 \rrbracket_T$, where $T$ is the set of the labels used on the vertices of $H$. Thus the above axiom follows from the axiom A1 and the inference rule R3 in Definition~\ref{def:CS}.

Let $f$ be a labeled quantum graph, and let $H$ be a graph, with all vertices of $H$ labeled. We say that $f$ is \emph{$H$-rooted}, if there exists a positive integer $k$, partially labeled graphs $G_1,G_2,\ldots,G_k$ and real numbers $\alpha_1,\ldots,\alpha_k$ such that $f = \sum_{i=1}^k \alpha_i \ind(G_i)$ and in every $G_i$ the subgraph induced by the labeled vertices is equal to $H$. An additional inference rule from~\cite{MR2371204} can now be stated as follows:
\begin{itemize}
\item Let $f$ be an $H$-rooted labeled quantum graph with $f \geq 0$. Let $H'$ be a graph with all vertices labeled, such that $H$ is an induced subgraph of $H'$ as a labeled graph.  Then $f\cdot \ind(H') \geq 0$.
\end{itemize}
Note that this inference rule is subsumed in inference rule R3 of Definition~\ref{def:CS}.

Let us further note that in~\cite{MR2371204} the product of two labeled quantum graphs is  only defined if both of the graphs are $H$-rooted for some $H$. In our framework two partially labeled graphs correspond to the same element of $\mathcal{A}$ if one is obtained from another by adding isolated, possibly labeled, vertices. Therefore we can consider every labeled quantum graph as a linear combination of partially labeled graphs, all of which have exactly the same set of labeled vertices. Thus every labeled quantum graph $f$ can be written in a form $f = \sum_{H} f_H$, where the summation is taken over all labeled graphs $H$ with $V(H)=[l]$ for some positive integer $l$, so that the vertices of $H$ are labeled in the natural way, and each $f_H$ is $H$-rooted. One can routinely deduce from the definitions that $f \geq 0$ if and only if $f_H \geq 0$ for every $H$. Further, it follows from (\ref{eq:orthogonality}) that if $f = \sum_{H} f_H$ and $g=\sum_{H}g_H$ are as above,  then $fg=\sum_{H}f_Hg_H$. Consequently, our multiplication inference rule could be restricted as in~\cite{MR2371204} to multiplying only $H$-rooted quantum graphs.

%%% ----------------------------------------------------------------------
\end{document}